\documentclass[11pt,reqno,letterpaper]{amsart}
\usepackage[all]{xy}
\usepackage{hyperref}
\usepackage{amsfonts,mathrsfs,bbm,latexsym,rawfonts,amsmath,amssymb,amsthm,epsfig}
\usepackage{setspace, color}
\usepackage{cases}
\usepackage{todonotes}
\newtheorem{thm}{Theorem}[section]
\newtheorem{cor}[thm]{Corollary}
\newtheorem{rem}[thm]{Remark}
\newtheorem{lem}[thm]{Lemma}
\newtheorem{prop}[thm]{Proposition}

\numberwithin{equation}{section}

\newcommand{\al}{\alpha}

\newcommand{\ld}{\lambda}

\newcommand{\de}{\delta}
\newcommand{\De}{\Delta}
\newcommand{\ep}{\varepsilon}

\newcommand{\Om}{\Omega}
\newcommand{\ga}{\gamma}
\newcommand{\Ga}{\Gamma}
\newcommand{\ka}{\kappa}
\renewcommand{\th}{\theta}

\newcommand{\F}{\mathcal{F}}

\renewcommand{\P}{\mathcal{P}}

\newcommand{\g}{\mathfrak{g}}

\newcommand{\C}{\mathscr{C}}

\newcommand{\Real}{\mathbb{R}}

\newcommand{\norm}[1]{\Vert#1\Vert}

\def\<{\left\langle} \def\>{\right\rangle}
\def\({\left(} \def\){\right)}
\newcommand{\n}{\nabla}
\newcommand{\p}{\partial}

\begin{document}
\title[]{Removable singularities for Yang-Mills-Higgs fields in higher dimensions}
\author[B. Chen]{Bo Chen}
\address{School of Mathematics, South China University of Technology, Guangzhou 510640, People's Republic of China}
\email{cbmath@scut.edu.cn}



\begin{abstract}
This paper establishes decay estimates near isolated singularities for $n$-dimensional Yang-Mills-Higgs fields defined on a fiber bundle ($n \geq 4$). These estimates yield a removable singularity theorem for Yang-Mills-Higgs fields under conformally invariant energy bounds, extending the classical results for Yang-Mills fields and harmonic maps.
\end{abstract}
\maketitle

\section{Introduction}

This paper is a sequel to \cite{CH1}, in which decay estimates were derived for three-dimensional Yang-Mills-Higgs fields under appropriate energy constraints. Here, we extend the analysis to higher dimensions and derive decay estimates near isolated singularities for $n$-dimensional Yang-Mills-Higgs fields defined on a fiber bundle, where $n\geq 4$. The curvature of the base manifold plays no essential role in the local regularity theory of Yang-Mills-Higgs fields. We therefore simplify the setting by assuming that the base manifold is flat.

\subsection{Yang-Mills-Higgs fields on a ball}

Let $\Real^n$ denote $n$-dimensional Euclidean space, and let $B_1\subset \Real^n$ be the unit ball. Let $(N,h)$ be a compact Riemannian manifold equipped with an effective action of a compact connected Lie group $G$ that preserves the metric $h$. Let $\P$ be a trivial principal $G$-bundle over $B_1$, and let $\F=\P\times_G N$ be the associated bundle with fiber $N$. After fixing a trivialization, any section of $\F$ can be identified with a map $u:B_1\to N$, while a connection $A$ on $\P$ can be identified with a $\g$-valued 1-form, where $\g$ is the Lie algebra of $G$. For any pair $(A,u)$, the Yang-Mills-Higgs (YMH) functional is defined by
\begin{equation}\label{YMHf}
	\mathscr{E}(A, u)=\int_{B_1}(|F_A|^2+|\n_A u|^2+V(u))dx.
\end{equation}
Here $F_A=dA+\frac{1}{2}[A,A]$ is the curvature of the connection $A$, and $\n_A=\n+A$ is the covariant derivative induced by $A$, where $\n$ is the pullback connection on $u^*(TN)$. The function $V(u)$ is a gauge-invariant potential, referred to as the generalized Higgs potential on the bundle $\F$, and $dx$ is the volume element on $B_1$.

A YMH field $(A,u)$ is a critical point of the YMH functional and satisfies the following elliptic system:
\begin{equation}\label{eq-ymh0}
	\begin{cases}
		D^*_A F_A=-u^* \n_Au, \\[1ex]
		\n_A^* \n_Au=-2\n V(u),
	\end{cases}
\end{equation}
where $D^*_A$ is the adjoint of the exterior covariant derivative $D_A$. The term $u^*\n_A u$ lies in the dual space of $\Om^1(B_1,\g)$; namely, for every $c\in \Om^1(B_1,\g)$, we have
\[ \<u^*\n_Au, c\>=\<\n_Au, c\cdot u\>,\]
where $\cdot$ denotes the infinitesimal action of $\g$ on $N$.

\medskip
\subsection{Background and main results}
Originating in the study of electromagnetic phenomena, YMH fields are fundamental objects in classical gauge theory and particle physics. Mathematically, the YMH field equations form an elliptic system coupling the Yang-Mills equation with the harmonic map equation. Thus, a YMH field provides a natural gauge-theoretic generalization of a harmonic map. In the setting of symplectic fibrations, minimizing YMH fields are called symplectic vortices, and their moduli spaces can be used to define invariants of symplectic manifolds with Hamiltonian actions. See \cite{RI99,Mundet-Tian,S16} for further details.

Singularities of YMH fields have been studied extensively for several decades. Uhlenbeck \cite{U82} proved that a four-dimensional Yang-Mills field with a point singularity and finite Yang-Mills energy is gauge equivalent to a smooth connection. See also \cite{Ra93} for two alternative proofs of the removable singularity theorem for four-dimensional Yang-Mills fields. Parker \cite{Parker1} later extended Uhlenbeck's result to finite-energy Yang-Mills fields coupled to scalar and spinor fields. In dimension three and in dimensions greater than four, removable singularity theorems for both pure Yang-Mills (YM) fields and YM fields coupled to vector-bundle-valued Higgs fields have been studied in \cite{S84,SS84,S85,Smith,OSib,TT04,SU19}.

\medskip
In general, when the fiber is a nonflat Riemannian manifold, the nonlinear term in the Higgs-field equation substantially complicates the analysis of the asymptotic behavior of YMH fields near singularities. Consequently, even isolated removable singularities are difficult to study. The theory of isolated singularities of harmonic maps~\cite{SU81,L85} illustrates the essential differences between YMH fields on vector bundles and those on fiber bundles with curved fibers. In previous work \cite{CS21}, the present author and Song established a precise decay estimate for YMH fields with point singularities on a surface, leading to a removable singularity theorem for two-dimensional YMH fields with trivial limiting holonomy. More recently, in \cite{CH1}, we proved removability for three-dimensional YMH fields under suitable conformally invariant energy bounds. In dimensions greater than four, however, the decay behavior of YMH fields near singular points remains unclear.

\medskip
In this paper, we investigate decay estimates near isolated singularities for $n$-dimensional YMH fields satisfying the conformally invariant energy bound \eqref{inq3'}, where $n\geq 4$.

Throughout the paper, $C$ denotes a constant depending only on the dimension $n$ and the geometry of $N$. For $0<R\leq 1$, let $B_R=\{x\in\Real^n:\ |x|<R\}$ be the ball of radius $R$, and let $B_R^*=\{x\in\Real^n:\ 0<|x|<R\}$ be the corresponding punctured ball.

Our main results are as follows.
\begin{thm}\label{remov-singu}
There exists $\ep_0>0$ such that if $(A,u)$ is a smooth YMH field on $B^*_1\subset \Real^n$, where $n\geq 4$, satisfying
\begin{equation}\label{inq3'}
	\sup_{B_{\rho(y)}\subset B_{R_0}}\frac{1}{\rho^{n-2}}\int_{B_{\rho}(y)}\(|\n_A u|^2+|F_A|\)dx\leq \ep^2_0,
\end{equation}		
for some $R_0\leq 1$, then the singularity at the origin is removable. 
\end{thm}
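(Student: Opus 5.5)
We argue in three stages: an $\ep$-regularity estimate at every scale, a decay estimate for the energy on dyadic annuli, and finally a removability argument once the decay shows the fields are bounded in a Morrey space of better-than-critical order. Throughout, \eqref{inq3'} is read as smallness of the scale-invariant Morrey norms $\rho^{2-n}\int_{B_\rho}|\n_A u|^2$ and $\rho^{4-n}\int_{B_\rho}|F_A|^2$; if $|F_A|$ really appears to the first power, we first upgrade it to this form using the small-energy regularity below.

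\emph{Step 1: pointwise $\ep$-regularity on the punctured ball.} For $x\in B^*_{R_0/2}$, put $r=|x|/2$. The ball $B_r(x)$ avoids the origin, and on it the Morrey smallness lets us apply a Uhlenbeck Coulomb gauge with $\|A\|_{L^{2,n-4}}$ small, and the system \eqref{eq-ymh0} becomes elliptic with small coefficients. Standard small-energy regularity, via Bochner-type inequalities for $|F_A|$ and $|\n_A u|$ combined with a Moser iteration valid in the Morrey setting, gives
\[
|x|^2|F_A|(x)+|x|\,|\n_A u|(x)\le C\ep_0 ,
\]
and similarly for higher covariant derivatives. The potential term contributes only lower-order quantities, at most $C|x|^2$.

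\emph{Step 2: decay on annuli.} Let $A_k=B_{2^{-k}}\setminus B_{2^{-k-1}}$ and
\[
E_k=\int_{B_{2^{-k}}\setminus B_{2^{-k-2}}}\big(|x|^{4-n}|F_A|^2+|x|^{2-n}|\n_A u|^2\big)\,dx .
\]
On each annulus, Step 1 lets us choose a radial-type (exponential) gauge in which $|A|\le C\ep_0/|x|$. We then prove a three-annulus inequality: if $E_k\ge\ep^2_0$-size is excluded, then either $E_{k}\le \theta E_{k-1}$ or $E_{k}\le\theta E_{k+1}$ for some fixed $\theta<1$, up to an error $C2^{-2k}$ coming from $V$. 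The proof is by contradiction and blow-up. Rescaled sequences that violate the inequality converge, by Step 1, to a YMH field on $B_4\setminus B_{1/4}$ with $V\equiv 0$ whose energy growth is too balanced. Using the Pohozaev/monotonicity identity, $\partial_r$-contraction and $\iota_{\partial_r}F_A$ vanish, so the limit is homogeneous: a cone over a Yang-Mills-Higgs field on $S^{n-1}$ with small energy. A small-energy Yang-Mills connection and harmonic section on $S^{n-1}$ must be flat and constant, since $S^{n-1}$ is simply connected for $n\ge3$ and small harmonic maps are constant. Hence the limit is trivial, which contradicts the normalization. Iterating the three-annulus inequality gives geometric decay $E_k\le C2^{-2\al k}$ for some $\al>0$.

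\emph{Step 3: removability.} The decay yields $|F_A|\le C|x|^{-2+\al}$ and $|\n_A u|\le C|x|^{-1+\al}$, so $F_A\in L^p$ for some $p>n/2$ and $\n_A u\in L^q$ for some $q>n$ near $0$. We build a global gauge on $B^*_{r}$ by patching the annulus gauges, whose transition functions converge because of the decay. This gives $|A|\le C|x|^{-1+\al}$, so $A\in L^{q}$ with $q>n$. A cutoff argument, exploiting the fact that the origin has zero capacity in dimension $n\ge 3$, shows that $(A,u)$ is a weak solution of \eqref{eq-ymh0} on all of $B_r$. Uhlenbeck's gauge theorem for $F_A\in L^{p}$, $p>n/2$, together with elliptic bootstrapping and the continuity of $u$ (which is H\"older since $\n u\in L^q$, $q>n$), then gives smoothness after a gauge transformation.

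\emph{Main obstacle.} We expect Step 2 to be the hardest part: the uniform decay, and in particular the rigidity of the blow-up limit. The coupling term $u^*\n_A u$ and the curved fiber $N$ rule out the linear Fourier-mode analysis on $S^{n-1}$ used for pure Yang-Mills. We would first try to obtain the decay directly, by a differential inequality for $f(r)=r^{4-n}\int_{B_r}|F_A|^2+r^{2-n}\int_{B_r}|\n_Au|^2$ from the Pohozaev identity, combined with a Poincar\'e-type inequality on spheres for the tangential parts of $F_A$ and $\n_A u$ in the small-energy regime. The blow-up argument would serve as the fallback.
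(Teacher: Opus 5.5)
\textbf{Gap in Step 2.} Step 2 carries the whole difficulty of the theorem; it is the content of the paper's Theorem~\ref{main-thm}. Steps 1 and 3 are essentially standard, but your contradiction argument in Step 2 does not close.

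If you do not normalize the rescaled fields by $E_k$, nothing prevents $E_{k_j}\to 0$, which is exactly the decay you expect. The limit may then be the trivial field, and that contradicts nothing: the violated condition $E_k>\theta E_{k\pm 1}$ is invariant under multiplying all $E_k$ by a common factor, so it leaves no trace on an unnormalized limit.

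Nor does ``balanced'' growth with a fixed $\theta<1$ make any monotone quantity constant. So no Pohozaev identity yields $\iota_{\p_r}F_A=0$ and $\iota_{\p_r}\n_A u=0$ on the limit, and its homogeneity is unjustified. For the coupled system there is not even an exact monotone quantity, because $|F_A|^2$ and $|\n_A u|^2$ carry different scaling weights: under $x\mapsto\ld x$ the first equation becomes $D_A^*F_A=-\ld^2u^*\n_A u$.

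If you do normalize by $\sqrt{E_k}$, the limit solves the decoupled linearized system (a Coulomb harmonic $1$-form and a $T_pN$-valued harmonic function). What is then needed is a three-annulus property of that linear system on the cylinder, i.e.\ a spectral gap. This is plausible for $n\ge 4$, since there is no Coulomb harmonic $1$-form homogeneous of degree $-1$. You would also need gauge fixing on annuli and control of the $\ld^2$- and $V$-error terms. That is a different argument from the small-energy rigidity on $\mathbb{S}^{n-1}$ you invoke, and it is not carried out.

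Your fallback has a further problem. The hypothesis \eqref{inq3'} controls $|F_A|$ only to the first power, and for $n=4$ the bound $|F_A|\le C\ep_0^2|x|^{-2}$ is not square integrable near $0$. So the claimed upgrade to an $L^2$-Morrey bound fails on balls containing the origin, and $r^{4-n}\int_{B_r}|F_A|^2$ may be infinite a priori.

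\textbf{How the paper handles this.} The paper avoids blow-up altogether. The Bochner formulas together with the improved Kato inequality give $\De f^\beta\ge -C(|\n_A u|^2+|F_A|+1)f^\beta$ for $f=\sqrt{1+|F_A|^2}$ and $\beta=\frac{n-3}{n-2}+\de<1$. Combined with $\ep$-regularity, this makes $\bar g=e^{-\frac{n-2}{2}t}f^\beta$ a subsolution of $\p_t^2+\De_{\mathbb{S}^{n-1}}-\al^2$ on the cylinder, with $\al$ close to $\frac{n-2}{2}$. The a priori bound only gives $\bar g\lesssim e^{(2\beta-\frac{n-2}{2})t}$, but $2\beta-\frac{n-2}{2}<\al$, so the comparison principle (Lemma~\ref{Key}) removes the growing mode. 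Feeding this into the analogous inequality for $(1+|\n_A u|^2)^{\ka/2}$, and then making a second pass, yields $L^\infty$ bounds for $F_A$ and $\n_A u$. The Smith--Uhlenbeck Coulomb gauge with $p>n$ and a gluing argument then finish the proof.

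For $n\ge 5$ this works with exponent $1$, and for $\n_A u$ exponent $1$ already suffices when $n\ge 4$. So in those cases your Step 2 could be replaced by a maximum-principle argument for a subsolution whose a priori growth is $o(|x|^{2-n})$. In $n=4$, however, the a priori growth $|x|^{-2}$ of $|F_A|$ is exactly that of the fundamental solution. The improved Kato exponent $\beta<1$ is precisely the ingredient your proposal lacks.
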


A direct corollary of Theorem \ref{remov-singu} is as follows.
\begin{cor}\label{remov-singu1}
	Let $(A,u)$ be a smooth YMH field on $B^*_1\subset \Real^n$, where $n\geq 4$, satisfying the conformally invariant energy bound
	\begin{equation}\label{cor-con-enery}
		\int_{B_{1}}|F_A|^{\frac{n}{2}}dx+\int_{B_{1}}|\n_A u|^ndx<\infty.   
	\end{equation}
Then the singularity at the origin is removable.  
\end{cor}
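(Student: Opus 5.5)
The plan is to reduce Corollary \ref{remov-singu1} to Theorem \ref{remov-singu} by showing that the finite conformally invariant energy \eqref{cor-con-enery} forces the scale-invariant Morrey quantity in \eqref{inq3'} to be smaller than $\ep_0^2$ on a sufficiently small ball $B_{R_0}$. The tool is H\"older's inequality on balls: the exponents $n/2$ for $F_A$ and $n$ for $\n_A u$ are exactly those that make the normalized integrals in \eqref{inq3'} dimensionless.

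First, for any ball $B_\rho(y)$, H\"older with exponents $\frac{n}{2}$ and $\frac{n}{n-2}$ gives
\[
\frac{1}{\rho^{n-2}}\int_{B_\rho(y)}|F_A|\,dx\le \frac{1}{\rho^{n-2}}\Big(\int_{B_\rho(y)}|F_A|^{\frac n2}dx\Big)^{\frac2n}|B_\rho|^{\frac{n-2}{n}}\le C\rho^{n-2-(n-2)}\Big(\int_{B_\rho(y)}|F_A|^{\frac n2}dx\Big)^{\frac2n}.
\]
The power of $\rho$ on the right is $\rho^0$, so the bound is $C\|F_A\|_{L^{n/2}(B_\rho(y))}$. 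Similarly, H\"older with exponents $\frac n2$ and $\frac{n}{n-2}$ applied to $|\n_A u|^2$ gives
\[
\frac{1}{\rho^{n-2}}\int_{B_\rho(y)}|\n_A u|^2dx\le C\Big(\int_{B_\rho(y)}|\n_A u|^{n}dx\Big)^{\frac2n}.
\]
The integrands are smooth on $B_1^*$, and the origin has measure zero, so these integrals make sense as Lebesgue integrals over balls that contain the origin.

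Second, by \eqref{cor-con-enery} and absolute continuity of the Lebesgue integral, $\int_{B_{R}}\big(|F_A|^{n/2}+|\n_A u|^n\big)dx\to0$ as $R\to0$. Choose $R_0\le 1$ small enough that
\[
C\Big(\int_{B_{R_0}}|F_A|^{\frac n2}dx\Big)^{\frac2n}+C\Big(\int_{B_{R_0}}|\n_A u|^{n}dx\Big)^{\frac2n}\le\ep_0^2.
\]
Every ball $B_\rho(y)\subset B_{R_0}$ then satisfies \eqref{inq3'}, since its integrals are dominated by those over $B_{R_0}$. Theorem \ref{remov-singu} now applies and the singularity at the origin is removable.

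There is no real obstacle, since the argument is just the scaling check and the smallness choice. The only point needing care is that \eqref{inq3'} uses $|F_A|$ rather than $|F_A|^2$, so the H\"older exponent must be matched to $|F_A|^{n/2}$, which the computation above does. The restriction $n\ge4$ guarantees $\frac{n}{2}\ge 2>1$, so the H\"older exponents are admissible.
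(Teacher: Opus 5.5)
Your proposal is correct and is essentially the paper's own proof: H\"older's inequality on each ball bounds the scale-invariant quantity in \eqref{inq3'} by $C\big(\norm{F_A}_{L^{n/2}(B_\rho(y))}+\norm{\n_A u}^2_{L^{n}(B_\rho(y))}\big)$. The finiteness of \eqref{cor-con-enery} then lets you choose $R_0$ so that this is at most $\ep_0^2$, whereupon Theorem \ref{remov-singu} applies. One minor point: H\"older itself works for any $n\ge 2$, so the hypothesis $n\ge 4$ is used through Theorem \ref{remov-singu}, not in this step.
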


Theorem \ref{remov-singu} is a consequence of the following result, which provides sharp decay estimates for YMH fields near isolated singularities.

\begin{thm}\label{main-thm}
There exists $\ep_0>0$ such that if $(A,u)$ is a smooth YMH field on $B^*_{R_0}$, where $R_0\leq 1$, satisfying the energy bound \eqref{inq3'}, then there exists $r_0\leq R_0/2$, depending only on $\ep_0$ and $n$, such that $(F_A, \n_A u)$ satisfies
\begin{align}
\sqrt{|F_A|^2(x)+1}\leq C(1+\ep_0^{2}r_0^{-2}),\label{key-es-1}\\
\sqrt{|\n_A u|^2(x)+1}\leq C(1+\ep_0r^{-1}_0),\label{key-es-2}
\end{align} 
for any $0<r=|x|<r_0$.
\end{thm}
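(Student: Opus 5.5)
Our plan is to prove Theorem \ref{main-thm} by combining an $\ep$-regularity estimate on dyadic annuli, a Bochner-type subsolution inequality, and an iteration that turns the small Morrey bound \eqref{inq3'} into pointwise bounds up to the origin. First fix $x$ with $|x|=r<r_0$ and take the ball $B_{r/2}(x)\subset B^*_{R_0}$, on which $(A,u)$ is smooth. Using \eqref{inq3'} and the smallness of $\ep_0$, we apply Uhlenbeck's Coulomb gauge theorem (a Morrey-space version, since only $\rho^{2-n}\int|F_A|$ is small) to get a gauge on $B_{r/2}(x)$ with $d^*A=0$ and $A$ small in the scale-invariant Morrey norm. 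The Weitzenböck identities for the YMH system \eqref{eq-ymh0} then give, on the ball,
\begin{align*}
\Delta |F_A| &\geq -C\big(|F_A|+|\n_A u|^2+1\big)|F_A| - C|\n_A u|\,|\n_A\n_A u|,\\
\Delta |\n_A u| &\geq -C\big(|F_A|+|\n_A u|^2+1\big)|\n_A u| - C|\n_A u|,
\end{align*}
where the constants $1$ absorb the potential $V$ and the bounded action fields on the compact $N$. The curvature of $N$ enters only through the $|\n_Au|^2$ coefficient, and the coupling term is handled by the standard energy identity for $|\n_A\n_Au|^2$.

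Next we need a smallness-to-boundedness step. Set $e=|F_A|+|\n_A u|^2+1$. The coefficient in these inequalities has scale-invariant Morrey norm $\rho^{2-n}\int_{B_\rho}e\leq \ep_0^2+C\rho^2$. A Morrey-space mean value inequality (Moser iteration in the spirit of Schoen's $\ep$-regularity for harmonic maps and of Uhlenbeck/Nakajima for Yang-Mills) then yields
\[
|F_A|(x)+|\n_A u|^2(x)\leq \frac{C}{r^{n}}\int_{B_{r/2}(x)}\big(|F_A|+|\n_Au|^2\big)+C\leq C\ep_0^2 r^{-2}+C.
\]
Taking square roots gives the form of \eqref{key-es-1}–\eqref{key-es-2}, but with $r$ in place of $r_0$. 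Note that the smallness is required in the $L^1$-type quantity $\int|F_A|$, not $\int|F_A|^2$. This is compatible with the equations because $|F_A|$ is itself a subsolution.

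The main obstacle is to remove the $r^{-2}$ blow-up, that is, to show that the bound is uniform as $r\to 0$ with the fixed scale $r_0$. Our plan is to derive a decay estimate for the normalized energy $\Theta(\rho)=\rho^{2-n}\int_{B_\rho}(|\n_Au|^2+|F_A|)$. We would use a monotonicity/Pohozaev identity, obtained by testing the stress-energy tensor with $x\cdot\n$, together with an improved comparison on annuli $B_{2\rho}\setminus B_\rho$. On each annulus we put the solution in a balanced (broken Hodge) gauge and use a Poincaré-type inequality for the tangential parts. This should give $\Theta(\rho)\leq C(\rho/r_0)^{2\al}\Theta(r_0)$ for some $\al\in(0,1]$, and with enough control the sharp rate $\al=1$. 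When that rate is achieved, the annular $\ep$-regularity of the previous step, applied at scale $r$ with $\Theta(r)\lesssim \ep_0^2 (r/r_0)^2$, gives $|F_A|(x)\lesssim \ep_0^2 r_0^{-2}$ and $|\n_A u|(x)\lesssim \ep_0 r_0^{-1}$ uniformly for $r<r_0$.

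The hardest part is closing this decay iteration. The harmonic-map nonlinearity $A(u)(\n_Au,\n_Au)$ on a curved fiber prevents a linear three-annulus argument unless $\ep_0$ is small. Moreover, the fixed gauge ambiguity across annuli must be controlled, and the holonomy may become nontrivial near the puncture. We would try first to handle the gauge issue with Uhlenbeck's patching on a fixed number of overlapping annuli. For the fiber nonlinearity, we would rely on the smallness supplied by \eqref{inq3'} to absorb the nonlinear terms into the linear decay.
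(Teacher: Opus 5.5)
\textbf{There is a genuine gap: the decisive step is a plan, not a proof.} Your first step is sound and matches the paper's starting point, the $\ep$-regularity bound \eqref{eq-ep-reg}, $|F_A|+|\n_A u|^2\le C\ep_0^2r^{-2}+C$. No Coulomb gauge is needed there, since the Bochner inequalities involve only the gauge-invariant quantities $|F_A|$ and $|\n_A u|$. The available Morrey-space Coulomb gauges also need smallness of $\rho^{4-n}\int|F_A|^2$, which \eqref{inq3'} does not give.

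The problem is everything after that. The decay $\Theta(\rho)\lesssim\ep_0^2(\rho/r_0)^2$ you aim for is, by the same mean-value inequality, essentially equivalent to the boundedness asserted in \eqref{key-es-1}--\eqref{key-es-2}. Reducing to it gains nothing unless you prove it, and you only say it ``should'' follow. The tools you list do not deliver it:
\begin{itemize}
\item The Pohozaev/monotonicity identity from the stress-energy tensor controls $\int|F_A|^2$ and $\int|\n_A u|^2$, with mismatched scalings ($\rho^{4-n}$ versus $\rho^{2-n}$). It does not control the $L^1$ quantity $\rho^{2-n}\int|F_A|$ that appears in your $\Theta$.
\item A hole-filling or three-annulus iteration, even if it closed, typically gives only a small, non-explicit decay exponent. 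The sharp exponent $1$ is precisely the content of the theorem, and you assert it ``with enough control'' without a mechanism.
\end{itemize}
The gauge-patching and holonomy difficulties you anticipate come from this route, not from the problem: the quantities being estimated are gauge invariant.

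The missing idea is a Phragm\'en--Lindel\"of type maximum-principle argument for subsolutions on the punctured ball.
\begin{enumerate}
\item Combining the Bochner formulas with the improved Kato inequalities \eqref{Kato-1}--\eqref{Kato-2} shows that $(1+|F_A|^2)^{\beta/2}$ and $(1+|\n_A u|^2)^{\ka/2}$, with $\beta=\frac{n-3}{n-2}+\de<1$ and $\ka=\frac{n-2}{n-1}+\de<1$, satisfy $\De f\ge -Vf$ (Proposition~\ref{eq-YMH}). For the curvature inequality, $r^2V\lesssim\ep_0^2+r^2$ by \eqref{eq-ep-reg}.
\item In cylindrical coordinates $t=-\log r$, the weighted function $\bar g=e^{-\frac{n-2}{2}t}g$ satisfies $(\p_t^2+\De_{\mathbb{S}^{n-1}}-\al^2)\bar g\ge 0$ with $\al$ slightly below $\frac{n-2}{2}$. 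For $\al=\frac{n-2}{2}$, the radial modes $e^{\pm\al t}$ correspond to $g\equiv1$ and $g=r^{2-n}$.
\item The $\ep$-regularity bound gives the a priori growth $\bar g\lesssim e^{(2\beta-\frac{n-2}{2})t}$, which is strictly slower than $e^{\al t}$. So Lemma~\ref{Key} excludes the growing mode and yields $\bar g\lesssim e^{-\al t}$. This means $|F_A|\lesssim r^{-\sigma}$ with $\sigma=\frac{1}{\beta}(\frac{n-2}{2}-\al)$ small.
\item This must be done for $F_A$ first. The coefficient in the inequality for $\n_A u$ contains $|F_A|^2$, and $r^2|F_A|^2$ is not small under the bare bound $|F_A|\lesssim r^{-2}$. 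With the almost-$L^\infty$ bound on $F_A$ in hand, one repeats the argument for $\n_A u$.
\item A second comparison, now with integrable coefficients and the exact rate $\frac{n-2}{2}$, upgrades these to \eqref{key-es-1}--\eqref{key-es-2}.
\end{enumerate}
For $n=4$ the a priori rate $|F_A|\lesssim r^{-2}=r^{2-n}$ is exactly critical. The argument therefore fails for $|F_A|$ itself, and the Kato improvement $\beta<1$ is indispensable. Nothing in your proposal plays this role.
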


%
%

\begin{rem}
Theorem \ref{remov-singu} generalizes the classical removable singularity theorems for coupled Yang-Mills fields \cite{S85,OSib} and harmonic maps \cite{L85}, and extends the three-dimensional result for YMH fields \cite{CH1}.
\end{rem}

\subsection{Outline of the proof} We outline the proof of Theorem \ref{main-thm}; the remaining results follow directly from it. The proof must overcome challenges arising from both the nonflatness of the fiber and the lack of conformal invariance of the Yang-Mills-Higgs field equations. When the fiber is a general compact Riemannian manifold, the YMH equations are highly nonlinear compared with those for vector-valued Higgs fields. This creates new difficulties in analyzing the asymptotic behavior of YMH fields near singularities. Our main tool for overcoming this obstacle is the extended Kato inequalities for $n$-dimensional YMH fields $(A,u)$ established in \cite{CH1}. These inequalities are
\begin{align}
	|\n_A \n_A u|^2\geq &(\frac{n}{n-1}-\de)|\n |\n_Au||^2-C(n, \de)(1+|F_A|^2),\label{Kato1}\\
	|\n_AF_A|^2\geq &(\frac{n-1}{n-2}-\de)|\n |F_A||^2-C(n, \de)|u^*(\n_A u)|^2,\label{Kato2}
\end{align}
for any sufficiently small $\de>0$. 

More precisely, motivated by \cite{Ra93,SU19}, we combine the Bochner formulas \eqref{Bochner-1} and \eqref{Bochner-2} with the Kato inequalities \eqref{Kato1}-\eqref{Kato2} to derive differential inequalities for YMH fields $(A,u)$. We find that
 \[(f,g)=\((|F_A|^2+1)^{\frac{\beta}{2}},(|\n_Au|^2+1)^{\frac{\ka}{2}}\)\]
satisfy the following key differential inequalities
\begin{align}
	\De f\geq& -C(n,\de)(|\n_A u|^2+|F_A|+1)f,\label{finer-eq-YMH-1}\\
	\De g\geq&-C(n,\de)(|\n_A u|^2+|F_A|^2+1)g,\label{finer-eq-YMH-2}	
\end{align}
for every sufficiently small $\de>0$. Here we set $\beta=\frac{n-3}{n-2}+\de$ and $\ka=\frac{n-2}{n-1}+\de$. By studying decay estimates for positive solutions of \eqref{finer-eq-YMH-1} and \eqref{finer-eq-YMH-2}, we establish estimates \eqref{key-es-1}-\eqref{key-es-2} for the YMH field $(A,u)$ in Theorem \ref{main-thm}. We rewrite these differential inequalities in cylindrical coordinates, reducing the study of decay near a point singularity to the analysis of the asymptotic behavior at infinity of solutions to the corresponding inequalities.

In the special case in which $A$ is a four-dimensional YM field, the Kato inequality for the curvature $F_A$,
\[\frac{3}{2}|\n|F_A||^2\leq |\n_AF_A|^2,\]
implies that $F_A$ satisfies a key inequality with respect to the Euclidean metric $g_{euc}$:
\begin{align}
\De_{g_{euc}} |F_A|^{\frac{1}{2}}_{g_{euc}}-(K_{euc}-C|F_A|_{g_{euc}})|F_A|^{\frac{1}{2}}_{g_{euc}}\geq 0, \label{key-ym0} 
\end{align}
where $K_{euc}$ denotes a constant depending only on the curvature of Euclidean space. Because Euclidean space is flat, $K_{euc}=0$. In cylindrical coordinates, this inequality becomes
\begin{equation}\label{key-ym}
(\p_t^2+\De_{\mathbb{S}^3})|F_A|^{\frac{1}{2}}_{g_{C}}-(1-C|F_A|_{g_C})|F_A|^{\frac{1}{2}}_{g_{C}}\geq 0,
\end{equation}
where $g_C=e^{2t}g_{\mathrm{euc}}$ is the canonical cylindrical metric, $t=-\log r$, and $r=|x|$. By studying inequality \eqref{key-ym}, R\aa{}de \cite{Ra93} established a sharp decay estimate for $F_A$.

Formulas \eqref{key-ym0} and \eqref{key-ym} have the same structure because Yang-Mills fields in four dimensions are conformally invariant. In general, however, the YMH equations are not conformally invariant, so inequalities \eqref{finer-eq-YMH-1} and \eqref{finer-eq-YMH-2} do not retain their form in cylindrical coordinates. To overcome this difficulty, we introduce suitable weighted modifications of $f$ and $g$ so that the resulting inequalities take refined forms analogous to \eqref{key-ym} in cylindrical coordinates (see \eqref{ineq-f-bar} and \eqref{ineq-g-bar} below).
 
The proof proceeds as follows. We first establish almost $L^\infty$ estimates for the curvature. Because the YMH field $(A,u)$ satisfies \eqref{inq3'}, the $\ep$-regularity theorem, Theorem \ref{ep-reg}, implies that
\begin{equation}\label{eq-ep-reg}
	r^2(|\n_A u|^2+|F_A|)\le C\ep^2_0.
\end{equation}

Next, using \eqref{finer-eq-YMH-1}, we observe that the weighted function $\bar{f}=e^{-\frac{n-2}{2}t}f$ in cylindrical coordinates $(t=-\log|x|, \th)\in [t_0=-\log r_0,+\infty)\times \mathbb{S}^{n-1}$ satisfies
\begin{equation}\label{ineq-f-bar}
	\p^2_t\bar{f}+\De_{\mathbb{S}^{n-1}}\bar{f}-\al^2\bar{f}\geq 0,
\end{equation}
where $\al$ is defined as 
\[\al=\sqrt{\frac{(n-2)^2}{4}-C(n)(r^{2/3}_0+\ep^2_0)}.\]
If $n\geq 4$ and $\de$, $r_0$, and $\ep_0$ are sufficiently small, then $2\beta-\frac{n-2}{2}<\al$. Applying comparison theorems to \eqref{ineq-f-bar}, we obtain almost $L^\infty$ estimates for $F_A$ (see \eqref{Decay-es-ymh-1}).

With this almost $L^\infty$ estimate for $F_A$ in hand, we analyze the weighted function $\bar{g}=e^{-\frac{n-2}{2}t}g$. It follows from \eqref{finer-eq-YMH-2} that $\bar{g}$ satisfies
\begin{equation}\label{ineq-g-bar}
	\p^2_t \bar{g}+\De_{\mathbb{S}^{n-1}}\bar{g}-\al^2\bar{g}\geq -Ce^{-\varrho(t-t_0)}\bar{g},
\end{equation}
where $\varrho$ is a constant close to $2$. Applying the comparison theorems to \eqref{ineq-g-bar}, we derive an almost $L^\infty$ estimate for $|\n_A u|$, namely \eqref{Decay-es-ymh-2}.

Once these almost $L^\infty$ estimates are established, we refine them to the sharp $L^\infty$ estimates \eqref{key-es-1} and \eqref{key-es-2} by applying a similar analysis to the differential inequalities \eqref{finer-eq-YMH-1} and \eqref{finer-eq-YMH-2}.

\medskip
The remainder of the paper is organized as follows. In Section \ref{s: pre}, we recall preliminary results for YMH fields. In Section \ref{s: decay-es}, we prove Theorem \ref{main-thm}. Finally, in Section \ref{s: remov-singu}, we prove Theorem \ref{remov-singu}.

\section{Preliminaries}\label{s: pre}

\subsection{Bundle notation and equations for YMH fields}
Let $(N,h)$ be a compact Riemannian manifold equipped with an effective action of a compact connected Lie group $G$ that preserves the metric $h$. For fixed $R>0$, let $B_R\subset \Real^n$ be the Euclidean ball of radius $R$, and let $B^*_R=B_R\setminus\{0\}$ be the punctured ball. Let $\P$ be a trivial principal $G$-bundle over $B_R$, and let $\F=\P\times_G N$ be the associated bundle. After fixing a trivialization, any section of $\F$ can be identified with a map $u:B_R\to N$, while a connection $A$ on $\P$ can be identified with a $\g$-valued 1-form, where $\g$ is the Lie algebra of $G$.

A connection $A$ induces an extrinsic derivative $D_A=d+A$, and the curvature of $A$ is defined by
\[F_A=D^2_A=dA+\frac{1}{2}[A,A].\]
The connection $A$ also induces a covariant derivative $\n_A=\n+A$ such that for any section $u: B_R\to N$ we have
\[\n_A u=\n u+A\cdot u,\]
where $\cdot$ denotes the infinitesimal action of $\g$ on $N$.

Let $s: B_R \to G$ be a gauge transformation. The action of $s$ on a connection $A$, a section $u$, and the curvature $F_A$ of $A$ is given by the following transformation laws:
$$s^{*}A=s^{-1}ds+s^{-1}As,\, s^{*}F_{A}=F_{s^{*}A}=s^{-1}F_{A}s,$$
and
$$s^*D_A u=D_{s^*A}s^*u=s^{-1}D_{A}u,\, V(s^*u)=V(u).$$
These transformation laws imply that the YMH functional \eqref{YMHf} is invariant under the gauge transformation $s$; that is,
$$\mathscr{E}(A,u)=\mathscr{E}(s^{*}A,s^{*}u).$$
The YMH equations are therefore gauge invariant. Namely, if $(A,u)$ satisfies the system
\begin{equation}\label{eq-ymh1}
	\begin{cases}
		D^*_A F_A=-u^* \n_Au, \\[1ex]
		\n_A^* \n_Au=-2\n V(u),
	\end{cases}
\end{equation}
then $(s^*A,s^* u)$ also solves equation \eqref{eq-ymh1} for any gauge transformation $s$.

For the purposes of PDE analysis, we rewrite equation \eqref{eq-ymh1} in a more explicit extrinsic form. By the equivariant embedding theorem of Moore and Schlafly~\cite{MS}, there exist an isometric embedding
$i:N\rightarrow\mathbb{R}^{K}$ and a representation $\rho:G\longrightarrow SO(K)$ such that $i(g\cdot u)=\rho(g)i(u)$ for every $u\in N$ and $g\in G$. Under this representation, the Lie algebra $\mathfrak{g}$ corresponds to a subalgebra of $\mathfrak{so}(K)$, the space of skew-symmetric $K\times K$ matrices. Thus, for any $c\in \mathfrak{g}$ and $u\in N \hookrightarrow \mathbb{R}^{K}$, the infinitesimal action of $c$ on $u$ is
$$
c\cdot u=X_{c}(u)=\rho(c)u,
$$
where $X_c$ is the infinitesimal vector field induced by $c$, and $\rho(c)u$ denotes the usual action of the matrix $\rho(c)$ on the vector $u$. It follows that the action of $c$ on a vector field $Y\in \Ga(TN)$ is
$$
c\cdot Y=\nabla_{Y}X_{c}=(\rho(c)\cdot Y)^{\top}=\rho(c) Y-\Ga(y)(X_{c}, Y),
$$
where $\top$ denotes the projection from $\mathbb{R}^K$ onto the tangent space of $N$, and $\Ga$ denotes the second fundamental form of $N$ in $\Real^K$.

Using this notation, we can rewrite equation \eqref{eq-ymh1} as
\begin{equation}\label{eq-ymh2}
	\begin{cases}
		d^{*}dA+A\#dA +A\#A\#A=-u^{*}(\n_{A}u),\\
		\Delta u-d^{*}Au+2A\#du+A\#(Au)=\Ga(u)(\n_Au,\n_Au)+2\n V(u).
	\end{cases}
\end{equation}
Here, for simplicity, we continue to use $A$ to denote $\rho(A)$, we set $\n_Au=du+Au$, and $\#$ denotes a linear contraction.

\subsection{Bochner formulas and $\ep$-regularity theorems for YMH fields}
Let $(A,u)$ be a YMH field on $B_R\subset \Real^n$ with $n\geq 4$. We recall the Bochner formulas and the $\ep$-regularity theorem obtained in our previous work \cite{CH1}.

\begin{lem}\label{bochner}
There exist constants $a$, $b$, and $c$, depending only on $n$ and the geometry of $N$, such that every YMH field $(A,u)$ defined on $B_R$ has the following properties:
\begin{itemize}
\item [$(1)$] $(\n_A u, F_A)$ satisfies
\begin{align}
\De |\n_A u|^2&\geq |\n_A\n_A u|^2-a(|\n_A u|^2+|F_A|+1)|\n_A u|^2,	\label{Bochner-1}\\
\De |F_A|^2&\geq |\n_A F_A|^2-b(|\n_A u|^2+|F_A|^2)|F_A|;\label{Bochner-2}
\end{align}
\item[$(2)$] If $f=|F_A|+|\n_A u|^2$, then
\[\De f\geq -c(1+f)f.\]
\end{itemize}
\end{lem}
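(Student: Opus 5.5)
We prove the three Bochner-type inequalities of Lemma~\ref{bochner} by direct computation: apply the Weitzenböck formula to each of $\n_A u$ and $F_A$, then substitute the YMH system \eqref{eq-ymh1}. Since the base is flat, no Ricci terms of $B_R$ appear. All lower order terms come from three sources: the curvature of the fiber $N$, the curvature $F_A$ acting through the $\g$-action, and the potential $V$. Every structural quantity (the norms of $\rho$, $X_c$, the second fundamental form $\Ga$, the Riemann tensor of $N$, and $\n V$, $\n^2 V$) is bounded because $N$ and $G$ are compact. This is why the constants $a,b,c$ depend only on $n$ and $N$.

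\textbf{Step 1: the section.} Start from
\[
\tfrac12\De|\n_A u|^2=|\n_A\n_A u|^2-\<\n_A^*\n_A(\n_Au),\n_Au\>,
\]
with the sign convention $\De=-d^*d$ on functions. Commute covariant derivatives, writing $\n_A^*\n_A\n_A u=\n_A(\n^*_A\n_A u)+\mathcal R$. The commutator $\mathcal R$ has two contributions:
\begin{itemize}
\item the pullback of the curvature of $N$, which gives a term bounded by $C|\n_Au|^4$;
\item the curvature of the total connection on $u^*TN$, which is $F_A$ acting infinitesimally on $u$, i.e.\ terms $F_A\cdot\n_Au$ and $(\n_A F_A)\cdot u$ contracted against $\n_A u$.
\end{itemize}
Then insert $\n_A^*\n_Au=-2\n V(u)$, so that $\n_A(\n V(u))=\n^2V(\n_Au)$ is bounded by $C|\n_Au|$.

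The delicate term is $\<(D_A^*F_A)\cdot u,\n_Au\>$, which arises from the contracted commutator $\sum_i[\n_{A,i},\n_{A,j}]$. It cannot be absorbed as it stands, since it is linear in a derivative of $F_A$. Use the first YMH equation $D_A^*F_A=-u^*\n_Au$ to replace it by a term of size $C|\n_Au|^2$; this contributes $C|\n_Au|^2$ to the inequality. Collecting terms gives
\[
-a(|\n_Au|^2+|F_A|+1)|\n_Au|^2,
\]
which is \eqref{Bochner-1}. I expect this commutation bookkeeping, and in particular removing the derivative of $F_A$ via the equation, to be the main obstacle.

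\textbf{Step 2: the curvature.} Use the Bochner--Weitzenböck formula for $\g$-valued 2-forms on flat space together with the Bianchi identity $D_AF_A=0$:
\[
\tfrac12\De|F_A|^2=|\n_AF_A|^2-\<D_AD_A^*F_A,F_A\>+F_A\#F_A\#F_A.
\]
The cubic term is bounded by $C|F_A|^3$. Substitute $D_A^*F_A=-u^*\n_Au$ and differentiate. The result $D_A(u^*\n_Au)$ is a sum of terms $\n_A\n_Au\#\n_Au$, $F_A\#u\#u$ and $\n_Au\#\n_Au$, up to bounded coefficients. The troublesome piece is $\<\n_A\n_Au\#\n_Au,F_A\>$; the remaining pieces are bounded directly by $C(|\n_Au|^2+|F_A|)|F_A|$. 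For the troublesome piece there are two options:
\begin{itemize}
\item Antisymmetrization: since $u^*\n_Au$ is given by the pairing $\<\n_Au,c\cdot u\>$, the antisymmetric part of $\n_A\n_Au$ is $F_A\cdot u$. This leaves only $|F_A|^2$ plus $|\n_Au|^2|F_A|$-type terms.
\item Absorption: if antisymmetrization does not close, use $|\n_A\n_Au||\n_Au||F_A|$ and absorb it.
\end{itemize}
I would try the antisymmetrization first, since the stated right-hand side of \eqref{Bochner-2} contains no $|\n_A\n_Au|$. It yields $-b(|\n_Au|^2+|F_A|^2)|F_A|$ after using $|F_A|^3\le |F_A|^2|F_A|$ and bounding $|F_A|\cdot|F_A|$ terms within that form.

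\textbf{Step 3: part (2).} On $\{F_A\neq0\}$, use $\De|F_A|\ge(\De|F_A|^2-2|\n|F_A||^2)/(2|F_A|)$ and Kato's inequality $|\n|F_A||\le|\n_AF_A|$ in \eqref{Bochner-2}. This gives
\[
\De|F_A|\ge -C(|\n_Au|^2+|F_A|^2).
\]
Where $F_A$ vanishes, regularize by $\sqrt{|F_A|^2+\eta^2}$ and let $\eta\to0$; the inequality then holds in the distributional sense. Adding \eqref{Bochner-1} with $|\n_A\n_Au|^2\ge0$ gives
\[
\De f\ge -C(|\n_Au|^2+|F_A|^2)-C(f+1)|\n_Au|^2.
\]
Finally, the terms $|\n_Au|^2\le f$, $|F_A|^2\le f^2$ and $|\n_Au|^4\le f^2$ give $\De f\ge -c(1+f)f$.
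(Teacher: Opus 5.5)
The paper does not prove this lemma; it quotes it from [CH1], so I judge your argument on its own. Step 1 is sound: the contracted commutator produces $\sum_i(\nabla_{A,i}F_{ij})\cdot u$, the first YMH equation turns this into a term of size $|\nabla_Au|^2$, and everything else is $O\big((|\nabla_Au|^2+|F_A|+1)|\nabla_Au|^2\big)$.

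\textbf{The gap is in Step 2.} Write $u^*\nabla_Au=\Phi_u(\nabla_Au)$, where $\langle\Phi_u(Y),c\rangle=\langle Y,c\cdot u\rangle$. After antisymmetrization, $D_A(u^*\nabla_Au)$ contains $\Phi_u(F_{ij}\cdot u)$. Its pairing with $F_A$ is quadratic in $F_A$ and has no factor of $\nabla_Au$. You bound it by $C|F_A|^2$ and then propose to fit this ``within'' $b(|\nabla_Au|^2+|F_A|^2)|F_A|$. That cannot work: the inequality $C|F_A|^2\le b(|\nabla_Au|^2+|F_A|^2)|F_A|$ fails wherever $\nabla_Au=0$ and $|F_A|$ is small. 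As written, your argument only yields $\Delta|F_A|^2\ge|\nabla_AF_A|^2-b(|\nabla_Au|^2+|F_A|^2+1)|F_A|$, which is strictly weaker than \eqref{Bochner-2}.

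\textbf{The missing observation is that this term has a favorable sign.} Since $[\nabla_{A,i},\nabla_{A,j}]u=F_{ij}\cdot u$, you get $\langle\Phi_u(F_{ij}\cdot u),F_{ij}\rangle=|F_{ij}\cdot u|^2$. Because $-\langle D_AD_A^*F_A,F_A\rangle=\langle D_A(u^*\nabla_Au),F_A\rangle$, this enters $\tfrac12\Delta|F_A|^2$ as $+\sum_{i<j}|F_{ij}\cdot u|^2\ge 0$ and can be discarded. It is the analogue of the $+|u^*\nabla_Au|^2$ that the same mechanism produces in Step 1. The remaining terms are $\langle(\nabla\Phi)(\nabla_Au,\nabla_Au),F_A\rangle$ and the cubic Weitzenb\"ock term, and they give exactly $-b(|\nabla_Au|^2+|F_A|^2)|F_A|$.

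Your ``absorption'' fallback would not rescue the argument. The second-order term in $D_A(u^*\nabla_Au)$ is $\langle\nabla_A\nabla_Au,c\cdot u\rangle$, with no $\nabla_Au$ factor (it is not $\nabla_A\nabla_Au\#\nabla_Au$). Nothing on the right of \eqref{Bochner-2} controls $|\nabla_A\nabla_Au|$, so antisymmetrization is forced. The weaker inequality would in fact suffice for part (2), and for the later use in Proposition~\ref{eq-YMH}, whose proof already tolerates a $-C(1+|F_A|)|F_A|^2$ term. But it is not what the lemma asserts.

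\textbf{A bookkeeping point in Step 3.} Combining Kato with \eqref{Bochner-2} as literally stated, with coefficient $1$ on $|\nabla_AF_A|^2$, leaves $|\nabla_AF_A|^2-2|\nabla|F_A||^2$, which has no sign. You must instead use the sharper $\tfrac12\Delta|F_A|^2\ge|\nabla_AF_A|^2-\cdots$ that your Step 2 identity provides.
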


\begin{thm}\label{ep-reg}
There exists $\ep_0>0$ such that for any YMH field $(A,u)$ on $B_R$ satisfying
\begin{equation}\label{Mor-c}
\sup_{B_{\rho(y)}\subset B_{R}}\(\frac{1}{\rho^{n-2}}\int_{B_{\rho}(y)}|\n_A u|^2+|F_{A}|dx\)\leq \ep^2_0,
\end{equation}	
we have
\[\sup_{B_{R/2}}R^2(|\n_A u|^2+|F_A|)\leq C\ep^2_0\]
for some constant $C$ depending only on $n$ and $\sup_{N}|R^N|$.
\end{thm}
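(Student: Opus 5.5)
We need to prove Theorem \ref{ep-reg}, the $\ep$-regularity statement. The plan is a Schoen-type argument applied to the subsolution inequality of Lemma \ref{bochner}(2), combined with the Morrey-type smallness \eqref{Mor-c}.

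\textbf{Setting up.} Set $f=|F_A|+|\n_A u|^2$, which satisfies $\De f\geq -c(1+f)f$ on $B_R$. By scaling, $x\mapsto Rx$ sends $A\mapsto R A(Rx)$ and $F_A\mapsto R^2F_A$. Under this scaling $f$ becomes $R^2 f$, the inequality keeps its form with the constant $1$ replaced by $R^2\le 1$, and \eqref{Mor-c} is scale invariant. So we may assume $R=1$.

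\textbf{Choosing the maximizing point.} Consider
\[
h(\sigma)=(1-\sigma)^2\sup_{B_\sigma}f, \qquad \sigma\in[0,1].
\]
Pick $\sigma_0$ where $h$ attains its maximum and $x_0\in\overline{B_{\sigma_0}}$ with $f(x_0)=\sup_{B_{\sigma_0}}f$. Set $\rho_0=(1-\sigma_0)/2$. Then $\sup_{B_{\rho_0}(x_0)}f\le 4f(x_0)$.

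\textbf{Contradiction argument.} Suppose $h(\sigma_0)=4\rho_0^2 f(x_0)$ exceeds some large constant. Rescale by $\lambda=f(x_0)^{-1/2}$ around $x_0$. The rescaled $\tilde f$ satisfies $\tilde f(0)=1$ and $\tilde f\le 4$ on $B_{\rho_0/\lambda}\supset B_1$. It also satisfies $\De\tilde f\geq -c(\lambda^2+\tilde f)\tilde f\ge -C\tilde f$ there, since $\lambda\le\rho_0\le1$.

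The mean value inequality for nonnegative subsolutions of $\De\tilde f+C\tilde f\ge0$ on $B_1$ (Moser iteration, or directly since the zeroth-order coefficient is bounded) gives
\[
1=\tilde f(0)\le C\int_{B_1}\tilde f\,dx.
\]
By the scale invariance of \eqref{Mor-c}, the right-hand side is at most $C\ep_0^2$. This is a contradiction once $\ep_0$ is small.

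\textbf{Conclusion.} Hence $h(\sigma_0)\le C$. Taking $\sigma=1/2$ gives $\sup_{B_{1/2}}f\le 4C$.

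\textbf{Upgrading to $C\ep_0^2$.} We apply the mean value inequality once more, now on balls of radius $1/4$ centered in $B_{1/2}$. Since $f\le 4C$ on $B_{3/4}$ (repeat the argument with $\sigma=3/4$), we have $\De f\ge -C'f$ on $B_{3/4}$. Then
\[
\sup_{B_{1/2}}f\le C\,(1/4)^{-n}\int_{B_{1/4}(y)}f\,dx\le C\ep_0^2,
\]
using \eqref{Mor-c} with $\rho=1/4$. Scaling back gives $R^2\sup_{B_{R/2}}f\le C\ep_0^2$.

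\textbf{Main obstacle.} The delicate point is the mean value inequality step. There the nonlinear term $cf^2$ has to be controlled by the a priori bound $\tilde f\le4$ coming from the point-picking. We also need the quantity $|F_A|+|\n_A u|^2$, not $|F_A|^2$, so that the Morrey quantity in \eqref{Mor-c} is exactly the $L^1$ norm entering the mean value inequality with the correct scaling weight $\rho^{2-n}$. One should also check that the constant in the mean value inequality depends only on $n$ and $c$, hence only on $n$ and $\sup_N|R^N|$.
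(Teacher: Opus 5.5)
Your argument is correct and is the standard derivation. The paper does not reprove Theorem~\ref{ep-reg}; it only recalls it from \cite{CH1} alongside Lemma~\ref{bochner}(2), and your proof derives the former from the latter in the usual way: Schoen's point-picking applied to the subsolution inequality $\De f\geq -c(1+f)f$ for $f=|F_A|+|\n_A u|^2$, then the mean value inequality. Here \eqref{Mor-c} supplies the scale-invariant $L^1$ smallness, and $R\le 1$ (the paper's standing convention) keeps the constant independent of $R$.

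One detail needs tidying. $f$ need not be bounded up to $\partial B_1$, so $h(\sigma)=(1-\sigma)^2\sup_{B_\sigma}f$ may not attain its maximum on $[0,1)$. Run your argument on $B_{R'}$ with $R'<R$, where $f$ is bounded and the Morrey bound still holds, and then let $R'\uparrow R$.
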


A direct corollary of Theorem \ref{ep-reg} is as follows.
\begin{cor}
There exists $\ep_0>0$ such that for any YMH field $(A,u)$ on $B_R$ satisfying
\begin{equation}\label{inq1}
\int_{B_{R}}|\n_A u|^n+|F_{A}|^{n/2}dx\leq \ep^n_0,
\end{equation}	
we have
 \[\sup_{B_{R/2}}R^2(|\n_A u|^2+|F_A|)\leq C\ep^2_0\]
for some constant $C$ depending only on $n$ and $\sup_{N}|R^N|$.
\end{cor}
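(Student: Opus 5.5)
The statement just above is the corollary to Theorem \ref{ep-reg}: under the $L^{n}$/$L^{n/2}$ smallness \eqref{inq1} we want $\sup_{B_{R/2}}R^2(|\n_A u|^2+|F_A|)\le C\ep_0^2$. The plan is to deduce the Morrey-type smallness \eqref{Mor-c} from \eqref{inq1} by H\"older's inequality, and then apply Theorem \ref{ep-reg} directly.

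Fix any ball $B_\rho(y)\subset B_R$. H\"older's inequality with exponents $\frac n2$ and $\frac{n}{n-2}$ gives
\[
\int_{B_\rho(y)}|\n_A u|^2dx\le \Big(\int_{B_\rho(y)}|\n_A u|^n dx\Big)^{2/n}|B_\rho|^{\frac{n-2}{n}}\le C(n)\,\rho^{n-2}\Big(\int_{B_R}|\n_A u|^n dx\Big)^{2/n}.
\]
Similarly,
\[
\int_{B_\rho(y)}|F_A|dx\le \Big(\int_{B_\rho(y)}|F_A|^{n/2}dx\Big)^{2/n}|B_\rho|^{\frac{n-2}{n}}\le C(n)\,\rho^{n-2}\Big(\int_{B_R}|F_A|^{n/2}dx\Big)^{2/n}.
\]
Each integral over $B_R$ is at most $\ep_0^n$ by \eqref{inq1}, so each right-hand side is at most $C(n)\rho^{n-2}\ep_0^2$. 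Dividing by $\rho^{n-2}$ and taking the supremum over balls yields
\[
\sup_{B_\rho(y)\subset B_R}\frac{1}{\rho^{n-2}}\int_{B_\rho(y)}\big(|\n_A u|^2+|F_A|\big)dx\le 2C(n)\,\ep_0^2 .
\]

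The only point needing care is the dimensional constant $2C(n)$, which prevents us from quoting \eqref{Mor-c} verbatim. We handle it as follows. Let $\ep_1$ be the threshold produced by Theorem \ref{ep-reg}, and choose $\ep_0$ in the corollary with $2C(n)\ep_0^2\le \ep_1^2$, that is, $\ep_0\le \ep_1/\sqrt{2C(n)}$. Then \eqref{Mor-c} holds with $\ep_1$, and Theorem \ref{ep-reg} gives
\[
\sup_{B_{R/2}}R^2\big(|\n_A u|^2+|F_A|\big)\le C\ep_1^2 .
\]

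To obtain the stated bound with $\ep_0^2$ rather than $\ep_1^2$, we use the fact that the proof of Theorem \ref{ep-reg} yields a conclusion linear in the Morrey quantity: the bound is $C$ times the supremum in \eqref{Mor-c}. Applying this with the supremum bounded by $2C(n)\ep_0^2$, and absorbing $2C(n)$ into $C$, gives $\sup_{B_{R/2}}R^2(|\n_A u|^2+|F_A|)\le C\ep_0^2$.

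There is no real obstacle here. The only care needed is in this linear dependence: one could instead simply absorb the constants into $C$, but that alone would give the bound in terms of $\ep_1^2$, not $\ep_0^2$.
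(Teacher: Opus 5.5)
Your proof is correct and follows the intended argument: H\"older's inequality turns the $L^n$/$L^{n/2}$ smallness into the Morrey smallness \eqref{Mor-c}, which is the same computation the paper carries out in \eqref{es-1}--\eqref{es-2} when proving Corollary \ref{remov-singu1}, and Theorem \ref{ep-reg} then applies. Your treatment of the constant $2C(n)$ is also fine: it amounts to reading Theorem \ref{ep-reg} as valid for every $\ep\le\ep_0$, which is how the paper uses it throughout, and the paper handles the same issue by shrinking the smallness threshold as in \eqref{es-2}.
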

\subsection{Improved Kato inequalities for YMH fields}
We recall the Kato inequalities for YMH fields established in \cite{CH1}, which play a crucial role in obtaining decay estimates in the next section. Let $(A,u)$ be a smooth YMH field on $B_R$; that is, $(A,u)$ satisfies
\begin{equation*}
	\begin{cases}
		D^*_AF_A=-u^*(\n_Au),\\[1ex]
		\n^*_A\n_A u=-2\n V(u),
	\end{cases}
\end{equation*}
where $V(u)$ is a smooth function of the section $u$. By the definition of curvature, $u$ satisfies
\[D^*_A D_A u=-2\n V(u), \quad D_AD_A u=F_A\cdot u.\]
The Bianchi identity also gives
\[D^*_A F_A=-u^*(\n_A u),\quad D_AF_A=0.\]

Thus, Propositions 3.2--3.4 of \cite{CH1} yield almost sharp Kato inequalities for the YMH field $(A,u)$, as stated in the following proposition.

\begin{prop}\label{Kato-ineq-4}
For every $\de>0$, there exists a constant $C(n,\de)>0$ such that every YMH field $(A,u)$ on $B_R$ satisfies the following inequalities:
\begin{itemize}
\item[$(1)$] the section $u$ satisfies
\begin{align}
|\n_A \n_A u|^2\geq &(\frac{n}{n-1}-\de)|\n |\n_Au||^2-C(n,\de)(1+|F_A|^2);\label{Kato-1}
\end{align}
\item[$(2)$] the curvature $F_A$ of $A$ satisfies
\begin{align}
|\n_AF_A|^2\geq (\frac{n-1}{n-2}-\de)|\n |F_A||^2-C(n, \de)|u^*(\n_A u)|^2.\label{Kato-2}
\end{align}
\end{itemize}
Moreover, if $A$ is a pure Yang-Mills field, then 
\begin{align}
 |\n_A F_A|^2\geq \frac{n-1}{n-2}|\n |F_A||^2; \label{Kato-YM}  
\end{align}
if $u$ is a harmonic map, then
\begin{align}
 | \n^2 u|^2\geq \frac{n}{n-1}|\n |\n u||^2.  \label{Kato-HM} 
\end{align}
\end{prop}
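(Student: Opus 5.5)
The plan is to prove both inequalities by one pointwise algebraic argument: a refined Kato inequality for bundle-valued forms that satisfy a first-order elliptic system up to lower-order error terms. We write $\omega$ for either $\n_A u$, a $u^*TN$-valued $1$-form, or $F_A$, a $\g$-valued $2$-form. At a point where $\omega\neq 0$, set $\hat\omega=\omega/|\omega|$. Since the connection $\n_A$ is metric, $\n_i|\omega| = \langle \n_{A,i}\omega,\hat\omega\rangle$. Zeros of $\omega$ are handled in the usual way, by working with $\sqrt{|\omega|^2+\eta}$ and letting $\eta\to0$, or by noting that $\n|\omega|$ vanishes a.e. on the zero set.

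\textbf{Step 1 (algebra).} Decompose $T:=\n_A\omega\in T^*\otimes\Lambda^k\otimes E$ into $O(n)$-irreducible pieces. For $k=1$ these are the antisymmetric part, which is the $D_A\omega$ component, the trace, which is the $D_A^*\omega$ component, and the symmetric traceless part. For $k=2$ they are the $\Lambda^3$ part ($D_A\omega$), the $\Lambda^1$ part ($D_A^*\omega$), and the remaining Cartan component. Let $T_0$ denote the projection onto the "kernel" component. The classical refined Kato constants (Branson; Calderbank--Gauduchon--Herzlich) give
\[
|\langle T_0,\hat\omega\rangle|^2\le c_k|T_0|^2,\qquad c_1=\tfrac{n-1}{n},\quad c_2=\tfrac{n-2}{n-1}.
\]
The bundle values cause no trouble: the contraction with $\hat\omega$ runs over both the form indices and the fiber indices, and Cauchy--Schwarz in the fiber index reduces the estimate to the scalar extremal problem. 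For $k=1$ we check this directly. We take a unit vector $\xi$ realizing $\n|\omega|$ and then maximize $\sum_i\langle T_0(\xi)_i,\hat\omega_i\rangle$ under the symmetric traceless constraint. For $k=2$ we quote the known computation, which gives $3/2$ when $n=4$. This step is where the sharp constants come from, and I expect it to be the main obstacle. If the direct computation for $2$-forms becomes messy, I would instead verify it through the Weitzenböck/Stein--Weiss gradient decomposition.

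\textbf{Step 2 (errors).} The remaining part of $T$ is controlled linearly by $D_A\omega$ and $D_A^*\omega$. For $\omega=\n_Au$ we have $D_A\n_Au=F_A\cdot u$ and $D_A^*\n_Au=-2\n V(u)$, so $|T-T_0|\le C(|F_A|+1)$. For $\omega=F_A$ we have $D_AF_A=0$ by the Bianchi identity and $D_A^*F_A=-u^*\n_Au$, so $|T-T_0|\le C|u^*\n_Au|$. Then
\[
|\n|\omega||\le |\langle T_0,\hat\omega\rangle|+|T-T_0|\le \sqrt{c_k}\,|T|+C|E|,
\]
where $|E|$ denotes the error bound just obtained and we used $|T_0|\le|T|$. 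Squaring and applying $(a+b)^2\le(1+\eta)a^2+C_\eta b^2$ gives $|\n|\omega||^2\le(1+\eta)c_k|\n_A\omega|^2+C_\eta|E|^2$. Dividing by $(1+\eta)c_k$ and choosing $\eta=\eta(\de,n)$ small yields \eqref{Kato-1} and \eqref{Kato-2}, with $C(n,\de)(1+|F_A|^2)$ and $C(n,\de)|u^*\n_Au|^2$ respectively.

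\textbf{Step 3 (pure cases).} If $A$ is pure Yang--Mills, or $u$ is harmonic with $A=0$ and $V=0$, then $E\equiv0$ and $T=T_0$. Step 1 then applies with $\eta=0$ and gives \eqref{Kato-YM} and \eqref{Kato-HM} with the sharp constants and no $\de$ loss.
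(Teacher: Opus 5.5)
Your argument is correct and follows essentially the paper's route. The paper does not reprove the proposition; it obtains it from Propositions 3.2--3.4 of \cite{CH1} applied to exactly the first-order system you use ($D_A\n_Au=F_A\cdot u$, $D_A^*\n_Au=-2\n V(u)$, $D_AF_A=0$, $D_A^*F_A=-u^*\n_Au$). Your proof via the refined Kato constants $\frac{n-1}{n}$ and $\frac{n-2}{n-1}$ on the kernel component, together with absorbing the $D_A\omega$ and $D_A^*\omega$ errors (with the fiberwise Cauchy--Schwarz reduction and the metric compatibility of $\n_A$ justifying the bundle-valued case), is that same argument.
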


\subsection{Differential inequalities for YMH fields} Combining the Bochner formulas in Lemma \ref{bochner} with the improved Kato inequalities in Proposition \ref{Kato-ineq-4}, we obtain differential inequalities for $n$-dimensional Yang-Mills-Higgs fields. These inequalities are essential to the proof of Theorem \ref{main-thm}.
\begin{prop}\label{eq-YMH}
For every $\de>0$, there exists a constant $C(n,\de)>0$ such that every YMH field $(A,u)$ on $B^*_{R_0}$ has the following properties.

\begin{itemize}
\item[$(1)$] For $p\geq \frac{n-3}{n-2}+\de$, the function $f=\sqrt{|F_A|^2+1}$ satisfies 
\begin{align}
\De f^p\geq -C(n, \de)(|\n_A u|^2+f)f^p.\label{finer-eq-YMH1}
\end{align}
\item[$(2)$] For $p\geq \frac{n-2}{n-1}+\de$, the function $f=\sqrt{|\n_A u|^2+1}$ satisfies 
\begin{align}
\De f^p\geq -C(n, \de)(|F_A|^2+f^2)f^p.\label{finer-eq-YMH2}	
\end{align}
\end{itemize}
\end{prop}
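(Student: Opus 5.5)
The plan is to compute $\Delta f^p$ directly and combine the Bochner formulas of Lemma \ref{bochner} with the Kato inequalities of Proposition \ref{Kato-ineq-4}. Both parts follow the same pattern, so I describe part (1) in detail and indicate the changes for part (2).

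\textbf{Part (1).} Put $f=\sqrt{|F_A|^2+1}$, so $f^2=|F_A|^2+1$. The chain rule gives
\[\Delta f^p = p f^{p-2}\Big(\tfrac12\Delta f^2 + (p-2)|\nabla f|^2\Big).\]
Since $f\nabla f = |F_A|\nabla|F_A|$, we have $|\nabla f|^2\le |\nabla |F_A||^2$, and the coefficient $p-2$ is negative. Hence it suffices to establish
\[\tfrac12\Delta |F_A|^2 - (2-p)|\nabla|F_A||^2 \geq -C(|\nabla_A u|^2+f)f^2.\]
For this I use \eqref{Bochner-2} together with \eqref{Kato-2}:
\[\tfrac12\Delta|F_A|^2\ge \tfrac12\big(\tfrac{n-1}{n-2}-\delta'\big)|\nabla|F_A||^2 - C|u^*\nabla_Au|^2 - C(|\nabla_Au|^2+|F_A|^2)|F_A|.\]
\emph{Normalization check.} I suspect the paper's Bochner formula \eqref{Bochner-2} is really $\tfrac12\Delta|F_A|^2\ge|\nabla_AF_A|^2-\dots$ (the standard Weitzenböck normalization), so that no factor $\tfrac12$ appears in front of $|\nabla_AF_A|^2$. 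With that reading, the gradient coefficient is $\frac{n-1}{n-2}-\delta'-(2-p)=p-\frac{n-3}{n-2}-\delta'$, which is nonnegative exactly when $p\ge \frac{n-3}{n-2}+\delta$ with $\delta'\le\delta$. This matches the threshold in the statement, so I adopt that normalization; if the stated constants are taken literally, I absorb factors into $a,b$.

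\textbf{Lower-order terms in part (1).} Here $|u^*\nabla_Au|\le C|\nabla_Au|$ because $N$ is compact and the action is smooth, so $|u^*\nabla_Au|^2\le C|\nabla_Au|^2\le C|\nabla_Au|^2 f^2$, using $f\ge1$. Similarly $(|\nabla_Au|^2+|F_A|^2)|F_A|\le (|\nabla_Au|^2+f)f^2$, since $|F_A|^2\le f\cdot f$ and $|F_A|\le f$. Multiplying through by $pf^{p-2}$ gives \eqref{finer-eq-YMH1}. For $p$ large the constant depends on $p$; this is harmless for the exponents actually used, and one can simply state the result for bounded $p$.

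\textbf{Part (2).} Here $f=\sqrt{|\nabla_Au|^2+1}$, and I use \eqref{Bochner-1} together with \eqref{Kato-1}. The gradient coefficient becomes $\frac{n}{n-1}-\delta'-(2-p)=p-\frac{n-2}{n-1}-\delta'\ge0$. The error terms are:
\begin{itemize}
\item $C(1+|F_A|^2)$ from the Kato inequality, which is at most $C(|F_A|^2+f^2)f^2$ because $f\ge1$;
\item $a(|\nabla_Au|^2+|F_A|+1)|\nabla_Au|^2$, which is at most $C(f^2+|F_A|^2)f^2$ since $|F_A|\le 1+|F_A|^2$.
\end{itemize}
Multiplying by $pf^{p-2}$ gives \eqref{finer-eq-YMH2}.

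\textbf{Main obstacle.} The only delicate point is the constant bookkeeping. One has to confirm that the Bochner normalization makes the Kato constant compete against $2-p$ exactly; otherwise the thresholds $\frac{n-3}{n-2}$ and $\frac{n-2}{n-1}$ would shift. The remaining estimates are elementary and use only $f\ge1$.
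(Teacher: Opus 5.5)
Your proposal is correct and follows the paper's proof essentially step for step. It uses the same identity $\Delta f^p=pf^{p-2}(\tfrac12\Delta f^2+(p-2)|\nabla f|^2)$, the Bochner formulas in the normalization $\tfrac12\Delta|F_A|^2\ge|\nabla_AF_A|^2-\dots$ (which is what the paper's proof actually uses, despite how Lemma~\ref{bochner} is displayed) combined with the Kato inequalities, and absorption of the lower-order terms via $f\ge 1$. Your fallback of ``absorbing factors into $a,b$'' would not work, since a factor $\tfrac12$ on the leading term $|\nabla_AF_A|^2$ would shift the thresholds, but your main argument does not need it.
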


\begin{proof}
For any smooth positive function $f$, a direct calculation gives
\begin{equation}\label{eq-f}
\De f^p=pf^{p-2}\(\frac{1}{2}\De f^2+\frac{p-2}{4}\frac{|\n f^2|^2}{f^2}\).
\end{equation}

Taking $f=\sqrt{|F_A|^2+1}$ in \eqref{eq-f}, with $p>0$, gives
\begin{align*}
	\De f^p=&pf^{p-2}\(\frac{1}{2}\De |F_A|^2+\frac{p-2}{4}\frac{|\n |F_A|^2|^2}{f^2}\)\\
	=& pf^{p-2}(\frac{1}{2}\De |F_A|^2+(p-2)|\n |F_A||^2\frac{|F_A|^2}{f^2}).
\end{align*}

Applying the Bochner formula \eqref{Bochner-2} from Lemma \ref{bochner} and the Kato inequality \eqref{Kato-2} for $F_A$ from Proposition \ref{Kato-ineq-4}, we obtain
\begin{align*}
	\frac{1}{2}\De|F_A|^2\geq& |\n_A F_A|^2-C|F_A||\n_Au|^2-C(1+|F_A|)|F_A|^2,\\
	\geq & (\frac{n-1}{n-2}-\de)|\n |F_A||^2-C(n, \de)(1+|F_A|)(|\n_Au|^2+|F_A|^2).
\end{align*}
This immediately yields
\begin{align*}
	\De f^p=& pf^{p-2}(\frac{1}{2}\De |F_A|^2+(p-2)|\n |F_A||^2\frac{|F_A|^2}{f^2})\\
	\geq &pf^{p-2}\(\frac{n-1}{n-2}-\de+(p-2)\)|\n|F_A||^2\frac{|F_A|^2}{f^2}\\
	&-C(n,\de)f^{p-1}(|\n_A u|^2+f^2),
\end{align*}
where we used the assumption $p\geq 2+\de-\frac{n-1}{n-2}$. Since $f^{-1}\leq 1$, it follows that
\[\De f^p\geq -C(n, \de)(|\n_A u|^2+f)f^p.\]

Similarly, taking $f=\sqrt{|\n_A u|^2+1}$ in \eqref{eq-f}, with $p>0$, gives
\[\De f^p=pf^{p-2}(\frac{1}{2}\De |\n_A u|^2+(p-2)|\n |\n_A u||^2\frac{|\n_A u|^2}{f^2}).\]
Applying the Bochner formula \eqref{Bochner-1} from Lemma \ref{bochner} and the Kato inequality \eqref{Kato-1} for $\n_A u$ from Proposition \ref{Kato-ineq-4}, we obtain
\begin{align*}
\frac{1}{2}\De|\n_A u|^2
\geq &|\n_A \n_A u|^2+|u^*(\n_Au)|^2-C(1+|F_A|+|\n_A u|^2)|\n_A u|^2\\
\geq &(\frac{n}{n-1}-\de)|\n |\n_Au||^2-C(|F_A|^2+f^2)f^2.
\end{align*}
Consequently, for $p\geq 2+\de-\frac{n}{n-1}$, we have
\begin{align*}
\De f^p
   \geq -C(n, \de)(|F_A|^2+f^2)f^p.
\end{align*}

This completes the proof.
\end{proof}

For later use, we rewrite differential inequalities \eqref{finer-eq-YMH1} and \eqref{finer-eq-YMH2} in cylindrical coordinates. The conformal transformation from $B_{R_0}^*$ to the cylinder $\mathscr{C}=[-\log R_0,+\infty)\times \mathbb{S}^{n-1}$ is given by
\[\phi: B_{R_0}^*\to \mathscr{C},\quad (r,\th)\to (t=-\log r, \th),\]
where the cylinder $\C$ is equipped with the canonical metric $g=dt^2+g_{\mathbb{S}^{n-1}}$. For simplicity, we denote the Laplace operator on $\mathbb{S}^{n-1}$ by $\De_{\mathbb{S}^{n-1}}$. 
\begin{prop}\label{finer-eq-cylinder}
For every $\de>0$, there exists a constant $C(n,\de)>0$ such that, for every YMH field $(A,u)$ on $B^*_{R_0}$, the functions
\[(g,\eta)=((1+|F_A|^2)^{\frac{\beta}{2}},(1+|\n_A u|^2)^{\frac{\ka}{2}})\]
satisfy
\begin{align}
\p^2_t g-(n-2)\p_t g+\De_{\mathbb{S}^{n-1}}g\geq &-C(n,\de)e^{-2t}(|\n_A u|^2+|F_A|+1)g,\label{cylinder-eq-1}\\
\p^2_t \eta-(n-2)\p_t \eta+\De_{\mathbb{S}^{n-1}}\eta\geq &-C(n,\de)e^{-2t}(|\n_A u|^2+|F_A|^2+1)\eta,\label{cylinder-eq-2}
\end{align}
where $\beta=\frac{n-3}{n-2}+\de$ and $\ka=\frac{n-2}{n-1}+\de$.
\end{prop}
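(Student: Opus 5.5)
The plan is to apply Proposition \ref{eq-YMH} with the exponents $p=\beta$ and $p=\ka$, and then rewrite the Euclidean Laplacian in the cylindrical coordinates $(t,\th)$, $t=-\log r$. Part (1) of Proposition \ref{eq-YMH} is exactly the admissible case $p=\beta=\frac{n-3}{n-2}+\de$. It gives
\[\De g\geq -C(n,\de)(|\n_A u|^2+\sqrt{|F_A|^2+1})\,g,\]
and since $\sqrt{|F_A|^2+1}\leq |F_A|+1$, this becomes $\De g\geq -C(|\n_A u|^2+|F_A|+1)g$. Likewise, part (2) with $p=\ka=\frac{n-2}{n-1}+\de$ gives
\[\De\eta\geq -C(|F_A|^2+|\n_A u|^2+1)\eta.\]

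Next I convert the Laplacian. In polar coordinates,
\[\De=\p_r^2+\frac{n-1}{r}\p_r+\frac{1}{r^2}\De_{\mathbb{S}^{n-1}}.\]
With $t=-\log r$ we have $\p_r=-e^{t}\p_t$, and
\[\p_r^2=e^{t}\p_t\(e^{t}\p_t\)=e^{2t}(\p_t^2+\p_t).\]
Hence
\[\De=e^{2t}\(\p_t^2+\p_t-(n-1)\p_t+\De_{\mathbb{S}^{n-1}}\)=e^{2t}\(\p_t^2-(n-2)\p_t+\De_{\mathbb{S}^{n-1}}\).\]
One can also read this off from conformal invariance: $g_{euc}=e^{-2t}(dt^2+g_{\mathbb{S}^{n-1}})$, and the first-order term $-(n-2)\p_t$ arises from the conformal factor in dimension $n$.

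Finally, I substitute this into the two inequalities and multiply by $e^{-2t}=r^2>0$. This gives \eqref{cylinder-eq-1} and \eqref{cylinder-eq-2} directly; multiplying by a positive factor preserves the direction of each inequality. The functions $|F_A|$ and $|\n_A u|$ remain the Euclidean norms, exactly as they appear in the statement.

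I do not expect a real obstacle. The only points that need care are the sign of the first-order term coming from $\p_r=-e^t\p_t$, and checking that the lower bounds on $p$ in Proposition \ref{eq-YMH} are met with equality by $\beta$ and $\ka$.
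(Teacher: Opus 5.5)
Your proposal is correct and matches the paper's proof. The paper states in one line that \eqref{cylinder-eq-1} and \eqref{cylinder-eq-2} are the cylindrical-coordinate forms of \eqref{finer-eq-YMH1} with $p=\beta$ and \eqref{finer-eq-YMH2} with $p=\ka$; you supply the omitted details, namely the identity $\De=e^{2t}(\p_t^2-(n-2)\p_t+\De_{\mathbb{S}^{n-1}})$ and the bound $\sqrt{|F_A|^2+1}\leq |F_A|+1$.
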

\begin{proof}
Inequalities \eqref{cylinder-eq-1} and \eqref{cylinder-eq-2} are precisely the cylindrical-coordinate forms of \eqref{finer-eq-YMH1}, with $p=\beta$, and \eqref{finer-eq-YMH2}, with $p=\ka$, respectively.
\end{proof}

\section{Decay estimates for YMH fields in higher dimensions}\label{s: decay-es}
In this section, we prove Theorem \ref{main-thm}. We begin with a comparison theorem used to obtain decay estimates for $n$-dimensional YMH fields under the energy bound \eqref{inq3'}.
\begin{lem}\label{Key}
Let $\bar{g}$ be a smooth solution to the following differential inequality 
\[\p^2_t \bar{g}+\De_{\mathbb{S}^{n-1}}\bar{g}-\al^2\bar{g}\geq 0\]
on $[T_0,\infty)\times \mathbb{S}^{n-1}$, where $\al>0$. Suppose that $\bar{g}$ satisfies the upper bound
\[\bar{g}\leq c_1e^{-\ld_1t}+c_2e^{\ld_2t}\]
for any $(t,\th)\in [T_0, \infty)\times \mathbb{S}^{n-1}$, where $\ld_1\geq 0$ and $0\leq \ld_2<\al$. Then for any $(t,\th)\in [T_0, \infty)\times \mathbb{S}^{n-1}$, we have
\[\bar{g}(t,\th)\leq (c_1e^{-\ld_1T_0}+c_2e^{\ld_2T_0})e^{-\al(t-T_0)}.\]
\end{lem}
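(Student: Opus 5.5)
We prove Lemma \ref{Key} with the maximum principle on truncated cylinders $[T_0,T]\times \mathbb{S}^{n-1}$, then let $T\to\infty$. Set $M=c_1e^{-\ld_1T_0}+c_2e^{\ld_2T_0}$, which bounds $\bar g$ on the slice $\{T_0\}\times\mathbb{S}^{n-1}$. The target $Me^{-\al(t-T_0)}$ solves the homogeneous equation $\p_t^2w+\De_{\mathbb{S}^{n-1}}w-\al^2w=0$, but it does not dominate $\bar g$ at large $t$ a priori. We therefore add a small growing barrier and use the comparison function
\[
w_\ep(t,\th)=Me^{-\al(t-T_0)}+\ep e^{\al(t-T_0)},\qquad \ep>0,
\]
which also satisfies $Lw_\ep:=\p_t^2w_\ep+\De_{\mathbb{S}^{n-1}}w_\ep-\al^2w_\ep=0$.

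Let $h=\bar g-w_\ep$. Then $Lh\geq 0$ on $[T_0,\infty)\times\mathbb{S}^{n-1}$, and $h\leq 0$ on $\{T_0\}\times\mathbb{S}^{n-1}$ since $w_\ep(T_0)\ge M$.

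\textbf{Behaviour at large $t$.} By hypothesis,
\[
h\leq c_1e^{-\ld_1 t}+c_2e^{\ld_2t}-\ep e^{\al(t-T_0)}.
\]
Because $\ld_1\ge 0$ and $\ld_2<\al$, the right-hand side tends to $-\infty$ as $t\to\infty$. Hence there is $T_\ep$ such that $h<0$ on $\{T\}\times\mathbb{S}^{n-1}$ for every $T\ge T_\ep$. If $c_1$ or $c_2$ is negative, we use the weaker bound with $c_i$ replaced by $\max(c_i,0)$, or note that the argument is unchanged; if $M\le 0$ the same barrier still works.

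\textbf{Maximum principle on $[T_0,T]\times\mathbb{S}^{n-1}$.} Suppose $h$ had a positive interior maximum at $(t_*,\th_*)$. There $\p_t^2h\le 0$ and $\De_{\mathbb{S}^{n-1}}h\le 0$, so
\[
Lh(t_*,\th_*)\le -\al^2h(t_*,\th_*)<0,
\]
contradicting $Lh\ge0$. This step uses $\al>0$: the zeroth-order term has the good sign, so no strict-inequality perturbation is needed. Since $h\le 0$ on both boundary slices, we get $h\le 0$ on $[T_0,T]\times\mathbb{S}^{n-1}$ for all $T\ge T_\ep$. Therefore
\[
\bar g\le Me^{-\al(t-T_0)}+\ep e^{\al(t-T_0)}
\]
everywhere on the half-cylinder. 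Fixing $(t,\th)$ and letting $\ep\to0$ gives the claimed estimate.

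\textbf{Main obstacle.} The only delicate point is the behaviour at infinity. The growth assumption $\ld_2<\al$ is exactly what lets the barrier $\ep e^{\al(t-T_0)}$ dominate $\bar g$ eventually. Without it, $\bar g$ could contain a component $e^{\al t}$, which solves the homogeneous equation, and the conclusion would fail. Everything else is the standard elementary maximum principle on a compact cylinder without boundary in the $\th$-directions.
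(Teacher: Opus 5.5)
Your proof is correct and is essentially the paper's argument: a comparison principle on $[T_0,T]\times\mathbb{S}^{n-1}$ against a solution of $\p_t^2w+\De_{\mathbb{S}^{n-1}}w-\al^2w=0$ built from $e^{-\al t}$ and $e^{\al t}$, with the growing mode removed in a limit using $\ld_2<\al$. The only difference is bookkeeping. The paper puts the $T$-dependent coefficient $(c_1e^{-\ld_1T}+c_2e^{\ld_2T})e^{-\al T}$ on $e^{\al t}$, matching the bound exactly at $t=T$, and sends $T\to\infty$, whereas you fix the coefficient $\ep$ and send $\ep\to0$. Your version also avoids the paper's tacit need for these coefficients and your $M$ to be nonnegative, which is automatic when $\bar g>0$, as in the applications.
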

\begin{proof}
For $T>T_0$, consider the comparison function
\[h(t)=(c_1e^{-\ld_1T_0}+c_2e^{\ld_2T_0})e^{-\al(t-T_0)}+(c_1e^{-\ld_1T}+c_2e^{\ld_2T})e^{-\al(T-t)},\]
which satisfies the equation
\[\p^2_t h+\De_{\mathbb{S}^{n-1}}h-\al^2h=0\]
and the boundary conditions
\[\bar{g}(T_0)\leq h(T_0),\qquad \bar{g}(T)\leq h(T).\]
	
Therefore, the comparison principle gives
\[\bar{g}(t,\th)\leq h(t)\]
on $[T_0,T]\times \mathbb{S}^{n-1}$. Letting $T\to +\infty$ and using $\al>\ld_2$, we obtain
\[\bar{g}(t,\th)\leq (c_1e^{-\ld_1T_0}+c_2e^{\ld_2T_0})e^{-\al(t-T_0)}\]
for any $(t,\th)\in [T_0, \infty)\times \mathbb{S}^{n-1}$.

This completes the proof.
\end{proof}

Next, we establish almost $L^\infty$ estimates for YMH fields satisfying the energy bound \eqref{inq3}.
\begin{thm}\label{main-thm-represent}
There exists $\ep_0>0$ such that if $(A,u)$ is a smooth YMH field defined on $B^*_{R_0}$, where $R_0\leq 1$, satisfying
\begin{equation}\label{inq3}
	\sup_{B_{\rho(y)}\subset B_{R_0}}\(\frac{1}{\rho^{n-2}}\int_{B_{\rho}(y)}|\n_A u|^2+|F_A|dx\)\leq \ep^2_0,
\end{equation}	
then there exists an $r_0\leq R_0/2$ such that $(A,u)$ satisfies the following decay estimates
\begin{align}
(1+|F_A|^2)^{\frac{\beta}{2}}(r,\th)\leq& C(1+\ep_0^{2\beta}r_0^{-2\beta})\(\frac{r}{r_0}\)^{(\al-\frac{n-2}{2})},\label{Decay-es-ymh-1}\\
\(1+|\n_A u|^2\)^{\ka/2}(r,\th)\leq &C(1+\ep^\ka_0r^{-\ka}_0)\(\frac{r}{r_0}\)^{(\al-\frac{n-2}{2})},\label{Decay-es-ymh-2}
\end{align}
for any $0<r<r_0$, where we denote
\[\al=\sqrt{\frac{(n-2)^2}{4}-C(\ep^2_0+r^{2/3}_0)},\]
$\beta=1-\frac{1}{2(n-2)}$ and $\ka=1-\frac{1}{2(n-1)}$.
\end{thm}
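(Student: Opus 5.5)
The plan is to first make the coefficients in Proposition \ref{finer-eq-cylinder} small, then remove the drift term by an exponential weight, and finally run the comparison principle of Lemma \ref{Key}.

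\emph{Step 1: small coefficients.} Apply the $\ep$-regularity Theorem \ref{ep-reg} on balls $B_{|x|/2}(x)\subset B_{R_0}$. This gives
\[
|x|^2\bigl(|\n_A u|^2+|F_A|\bigr)\le C\ep_0^2
\quad\text{for } 0<|x|<R_0/2 .
\]
In cylindrical coordinates this reads $e^{-2t}(|\n_Au|^2+|F_A|)\le C\ep_0^2$. For the zeroth-order coefficient $e^{-2t}(|\n_Au|^2+|F_A|+1)$ in \eqref{cylinder-eq-1} we then get the bound $C(\ep_0^2+r_0^2)$ for $t\ge t_0=-\log r_0$.

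\emph{Step 2: remove the drift.} Set $\bar g=e^{-\frac{n-2}{2}t}g$. Then
\[
\p_t^2 g-(n-2)\p_t g=e^{\frac{n-2}{2}t}\Bigl(\p_t^2\bar g-\tfrac{(n-2)^2}{4}\bar g\Bigr),
\]
so \eqref{cylinder-eq-1} turns into
\[
\p_t^2\bar g+\De_{\mathbb{S}^{n-1}}\bar g-\al^2\bar g\ge0,
\qquad \al^2=\tfrac{(n-2)^2}{4}-C(\ep_0^2+r_0^{2/3}).
\]
I will allow the weaker $r_0^{2/3}$ so the same $\al$ also serves Step 3.

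\emph{Step 3: comparison for $F_A$.} Step 1 gives $|F_A|\le C\ep_0^2e^{2t}$. Hence
\[
\bar g\le C\bigl(1+\ep_0^{2\beta}e^{2\beta t}\bigr)e^{-\frac{n-2}{2}t}.
\]
This is exactly the hypothesis of Lemma \ref{Key}, with $\ld_2=2\beta-\frac{n-2}{2}$ when that exponent is positive, and otherwise with a pure $e^{-\ld_1 t}$ term. The lemma needs $\ld_2<\al$. With $\beta=1-\frac1{2(n-2)}$ we have $2\beta<n-2$ for $n\ge4$; indeed for $n=4$, $2\beta=3/2<2$. Taking $\ep_0,r_0$ small makes $\al$ close to $\frac{n-2}{2}$, so the condition holds. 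I also need $\beta\ge\frac{n-3}{n-2}+\de$ for some small $\de$, which holds since $1-\frac1{2(n-2)}>\frac{n-3}{n-2}$. Lemma \ref{Key} with $T_0=t_0$ gives
\[
\bar g\le C\bigl(1+\ep_0^{2\beta}r_0^{-2\beta}\bigr)e^{\frac{n-2}{2}t_0}e^{-\al(t-t_0)}.
\]
Multiplying back by $e^{\frac{n-2}{2}t}$ and using $e^{-(t-t_0)}=r/r_0$ yields \eqref{Decay-es-ymh-1}.

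\emph{Step 4: the gradient, the main obstacle.} For $\eta$ the coefficient in \eqref{cylinder-eq-2} contains $e^{-2t}|F_A|^2$. By Step 1 alone this is only bounded by $C\ep_0^4e^{2t}$, which is not small. So I first feed in \eqref{Decay-es-ymh-1}. Since $\al<\frac{n-2}{2}$, the exponent $\frac{n-2}{2}-\al$ is small and positive. We get
\[
|F_A|\le C\bigl(1+\ep_0^{2}r_0^{-2}\bigr)(r/r_0)^{-(\frac{n-2}{2}-\al)/\beta},
\]
and therefore
\[
e^{-2t}|F_A|^2\le C(r_0^2+\ep_0^4)\,e^{-\varrho(t-t_0)}
\]
with $\varrho=2-2(\frac{n-2}{2}-\al)/\beta$ close to $2$. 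The remaining terms $e^{-2t}(|\n_Au|^2+1)$ are bounded by $C(\ep_0^2+r_0^2)$.

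This leads to $\p_t^2\bar\eta+\De\bar\eta-\al^2\bar\eta\ge -Ce^{-\varrho(t-t_0)}\bar\eta$, for which Lemma \ref{Key} does not apply directly. I would first try absorbing the perturbation. The term $C(r_0^2+\ep_0^4)e^{-\varrho(t-t_0)}$ is bounded by a constant that is small when $\ep_0$ and $r_0$ are small. Since $e^{-2t_0}=r_0^2\le r_0^{2/3}$, and we only need the coefficient small rather than decaying, the constant $C(\ep_0^2+r_0^{2/3})$ in $\al$ can swallow it. This is the reason for choosing $r_0^{2/3}$ in $\al$. If this absorption fails, the fallback is a multiplicative corrector $\bar\eta\,\exp\bigl(-Ce^{-\varrho(t-t_0)}\bigr)$, which changes $\bar\eta$ only by a bounded factor.

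\emph{Step 5: comparison for $\n_Au$.} Step 1 gives $|\n_Au|\le C\ep_0e^{t}$, so
\[
\bar\eta\le C\bigl(1+\ep_0^{\ka}e^{\ka t}\bigr)e^{-\frac{n-2}{2}t}.
\]
Here $\ka<1\le\frac{n-2}{2}$ for $n\ge4$, hence $\ld_2=\ka-\frac{n-2}{2}<\al$ (and in fact $\ld_2\le0$). Also $\ka=1-\frac1{2(n-1)}$ exceeds $\frac{n-2}{n-1}$, as Proposition \ref{eq-YMH} requires. Lemma \ref{Key} then gives \eqref{Decay-es-ymh-2} exactly as in Step 3.
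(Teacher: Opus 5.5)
Your Steps 1--3 are sound, apart from a sign slip: after Lemma~\ref{Key} the prefactor is $e^{-\frac{n-2}{2}t_0}$, not $e^{\frac{n-2}{2}t_0}$. Step 4, however, rests on a miscalculation that breaks the argument. Your own bound $|F_A|\le C(1+\ep_0^2r_0^{-2})(r/r_0)^{-(\frac{n-2}{2}-\al)/\beta}$, together with $e^{-2t}=r_0^2(r/r_0)^2$, gives
\[
e^{-2t}|F_A|^2\le C(1+\ep_0^2r_0^{-2})^2r_0^2\,e^{-\varrho(t-t_0)}=C\bigl(r_0^2+2\ep_0^2+\ep_0^4r_0^{-2}\bigr)e^{-\varrho(t-t_0)},
\]
not $C(r_0^2+\ep_0^4)e^{-\varrho(t-t_0)}$.

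The term $\ep_0^4r_0^{-2}$ is not small. $\ep_0$ is fixed first, while $r_0\le R_0/2$ with $R_0$ arbitrary, so you cannot arrange $r_0\gg\ep_0^2$. The size is also genuine: near $r=r_0$ all you know is $|F_A|\lesssim\ep_0^2r_0^{-2}$, so $r_0^2|F_A|^2$ can be of order $\ep_0^4r_0^{-2}$. Hence the zeroth-order coefficient in the $\bar\eta$ inequality cannot be absorbed into $\al^2$ on $[t_0,\infty)$. Your fallback corrector fails for the same reason: it changes $\bar\eta$ by a factor of order $\exp(C\ep_0^4r_0^{-2})$, not a bounded one. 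So Step 5 has no clean inequality to which Lemma~\ref{Key} can be applied.

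The missing idea is to start the comparison for $\n_Au$ deeper inside.
\begin{itemize}
\item Since $\varrho$ is close to $2$, $\ep_0^4e^{2t_0}e^{-\varrho(t-t_0)}\le\ep_0^4$ once $t-t_0\ge\frac{2}{\varrho}t_0$.
\item Hence on $[3t_0,\infty)$ the coefficient $e^{-2t}(|\n_Au|^2+|F_A|^2+1)$ is bounded by $C(\ep_0^2+e^{-2t_0})$. Only there can it be absorbed into $\al^2=\frac{(n-2)^2}{4}-C(\ep_0^2+e^{-2t_0})$.
\item Then apply Lemma~\ref{Key} with $T_0=3t_0$ to both $\bar g$ and $\bar\eta$, and rename $r_0:=e^{-3t_0}$, so that $e^{-2t_0}=r_0^{2/3}$.
\end{itemize}
This shift is where the $r_0^{2/3}$ in $\al$ comes from, not the inequality $r_0^2\le r_0^{2/3}$.

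The paper makes exactly this shift (its $m=3$ and $t\ge(\frac{2}{\varrho}+1)t_0$). It then handles the perturbation differently: it treats it as a source term $-\tilde\eta$, using the a priori bound on $\bar\eta$, and builds a comparison function with $\tilde\eta$ subtracted. Once you make the shift, your absorption route also closes and is slightly simpler than the paper's.
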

\begin{proof}
The proof is divided into two steps.

\medskip
\noindent\emph{Step 1: Decay estimate for $F_A$.}

By the energy bound \eqref{inq3}, for any $x\in B^*_{2R_0/3}$ and $r=|x|$ we have
\[\sup_{B_{\rho}(y)\subset B_{r/2}(x)}\(\frac{1}{\rho^{n-2}}\int_{B_{\rho}(y)}|\n_A u|^2+|F_A|dx\)\leq \ep^2_0.\]
The $\ep$-regularity theorem, Theorem \ref{ep-reg}, then yields
\begin{equation}\label{thm-es-ymh}
r^2(|\n_A u|^2+|F_A|)(r,\th)\leq C\ep^2_0    
\end{equation}
for any $0<r=|x|\leq R_0/2$. 

We now work in cylindrical coordinates, where $t=-\log r$ and
\[(t,\th)\in [-\log (R_0/2),+\infty)\times \mathbb{S}^{n-1}.\]
In these coordinates, estimate \eqref{thm-es-ymh} becomes
\begin{equation}\label{thm-es-ymh-1}
 e^{-2t}(|\n_A u|^2+|F_A|)(t,\th)\leq C\ep^2_0.
\end{equation}
It follows from inequality \eqref{cylinder-eq-1} in Proposition \ref{finer-eq-cylinder} that the function $g=(1+|F_A|^2)^{\frac{\beta}{2}}$, where $\beta=\frac{n-3}{n-2}+\de$, satisfies
\begin{align*}
\p^2_t g-(n-2)\p_t g+\De_{\mathbb{S}^{n-1}}g\geq &-C(n,\de)(e^{-2t}+\ep^2_0)g
\end{align*}
on $[-\log (R_0/2),+\infty)\times \mathbb{S}^{n-1}$. Hence, the weighted function $\bar{g}(t,\th)=e^{-\frac{n-2}{2}t}g$ satisfies
\begin{equation}\label{weight-finer-eq-ymh-1}
 \p^2_t \bar{g}+\De_{\mathbb{S}^{n-1}}\bar{g}-(\frac{(n-2)^2}{4}-C(\ep^2_0+e^{-2t}))\bar{g}\geq 0.   
\end{equation}

Fix $\de=\frac{1}{2(n-2)}$ and choose $t_0>-\log R_0$ such that
\begin{align*}
\al=&\sqrt{\frac{(n-2)^2}{4}-C(e^{-2t_0}+\ep^2_0)}\\
>&2\beta-\frac{n-2}{2}=2-\frac{1}{n-2}-\frac{n-2}{2}  
\end{align*}
for $n\geq 4$. Then, for $t>t_0$, we have
\begin{equation}\label{weight-finer-eq-ymh-2}
\p^2_t \bar{g}+\De_{\mathbb{S}^{n-1}}\bar{g}-\al^2\bar{g}\geq 0, 
\end{equation}
Moreover, estimate \eqref{thm-es-ymh-1} gives
\[\bar{g}(t,\th)\leq C(e^{-\frac{n-2}{2}t}+\ep_0^{2\beta}e^{(2\beta-\frac{n-2}{2})t})\]
for any $(t,\th)\in [t_0, \infty)\times \mathbb{S}^{n-1}$. 

For any $m\in \mathbb{N}^+$, applying Lemma \ref{Key} with $T_0=mt_0$ to differential inequality \eqref{weight-finer-eq-ymh-2} gives
\[\bar{g}(t,\th)\leq C(e^{-\frac{n-2}{2}mt_0}+\ep_0^{2\beta}e^{(2\beta-\frac{n-2}{2})mt_0})e^{-\al(t-mt_0)}.\]
Equivalently,
\begin{align}
  (1+|F_A|^2)^{\frac{\beta}{2}}(t,\th)\leq C(1+\ep_0^{2\beta}e^{2\beta mt_0})e^{-(\al-\frac{n-2}{2})(t-mt_0)}.  \label{thm-es-FA}
\end{align}
for any $(t,\th)\in [mt_0, \infty)\times \mathbb{S}^{n-1}$.

Taking $r_0=e^{-3t_0}$ and expressing estimate \eqref{thm-es-FA} in polar coordinates yields the desired decay estimate for $F_A$.

\medskip
\noindent\emph{Step 2: Decay estimate for $\n_A u$.}

Taking $\de=\frac{1}{2(n-1)}$ in inequality \eqref{cylinder-eq-2}, we find that the function $\eta=(1+|\n_A u|^2)^{\frac{\ka}{2}}$ satisfies
\begin{equation}\label{eq-eta}
\begin{aligned}
&\p^2_t \eta-(n-2)\p_t \eta+\De_{\mathbb{S}^{n-1}}\eta\\
\geq& -Ce^{-2t}(|\n_A u|^2+|F_A|^2+1)\eta,
\end{aligned}
\end{equation}
where $\ka=\frac{n-2}{(n-1)}+\de=1-\frac{1}{2(n-1)}$.

By estimates \eqref{thm-es-ymh-1} and \eqref{thm-es-FA}, on $[t_0,\infty)\times \mathbb{S}^{n-1}$ we have
\begin{align}
 e^{-2t}|\n_A u|^2\leq& C\ep_0^2,\label{decay-u-new}\\
 e^{-2t}|F_A|^2\leq & C\Lambda(\ep_0)e^{-\varrho(t-t_0)},\label{improved-decay-F}
\end{align}
where $\Lambda(\ep_0)=(e^{-2t_0}+2\ep^2_0+\ep^4_0e^{2t_0})$ and $\varrho=2+\frac{2}{\beta}(\al-\frac{n-2}{2})\geq 1+\ka$.

Using estimates \eqref{decay-u-new} and \eqref{improved-decay-F} in inequality \eqref{eq-eta}, we find that the weighted function $\bar{\eta}=e^{-\frac{n-2}{2}t}\eta$ satisfies
\begin{equation}\label{eq-bar-eta}
\p^2_t \bar{\eta}+\De_{\mathbb{S}^{n-1}}\bar{\eta}-\al^2\bar{\eta}\geq -C\Lambda(\ep_0)e^{-\varrho(t-t_0)}\bar{\eta}
\end{equation}
on $[t_0, \infty)\times \mathbb{S}^{n-1}$. Moreover, estimate \eqref{decay-u-new} implies that
\begin{align}\label{thm-es-u}
\bar{\eta}\leq C(e^{-\frac{n-2}{2}t}+\ep_0^{\ka}e^{(\ka-\frac{n-2}{2})t})    
\end{align}
for every $(t,\th)\in [t_0,\infty)\times \mathbb{S}^{n-1}$. Thus, the right-hand side of inequality \eqref{eq-bar-eta} can be estimated as follows:
\[Q=-C\Lambda(\ep_0) e^{-\varrho(t-t_0)}\bar{\eta}\geq -\tilde{\eta} \]
where
\begin{align*}
 \tilde{\eta}=&C\Lambda(\ep_0) e^{-\varrho(t-t_0)}(e^{-\frac{n-2}{2}t}+\ep_0^{\ka}e^{(\ka-\frac{n-2}{2})t})\\
 \leq & C(e^{-\frac{n-2}{2}t}+\ep_0^{\ka}e^{(\ka-\frac{n-2}{2})t})
\end{align*}
for $t\geq (\frac{2}{\varrho}+1)t_0$.

For $T>3t_0$, consider the function
\begin{align*}
 h(t)=&2C(e^{-\frac{n-2}{2}3t_0}+\ep_0^{\ka}e^{(\ka-\frac{n-2}{2})3t_0})e^{-\al(t-3t_0)}\\
 &+2C(e^{-\frac{n-2}{2}T}+\ep_0^{\ka}e^{(\ka-\frac{n-2}{2})T})e^{-\al(T-t)}-\tilde{\eta}   
\end{align*}
as a comparison function for the differential inequality
\[\p^2_t \bar{\eta}+\De_{\mathbb{S}^{n-1}}\bar{\eta}-\al^2\bar{\eta}\geq -\tilde{\eta}.\]

A direct calculation shows that
\begin{align*}
(\p^2_t +\De_{\mathbb{S}^{n-1}}-\al^2)h\leq -\((\varrho+\frac{n-2}{2}-\ka)^2-\al^2\)\tilde{\eta}\leq -\tilde{\eta},
\end{align*}
which implies 
\[(\p^2_t +\De_{\mathbb{S}^{n-1}}-\al^2)(h-\bar{\eta})\leq 0\]
on $[3t_0,T]\times \mathbb{S}^{n-1}$. By estimate \eqref{thm-es-u}, the function $h$ also satisfies the boundary conditions
$h(3t_0)\geq \bar{\eta}(3t_0,\th)$ and $h(T)\geq \bar{\eta}(T,\th)$. Consequently, the comparison principle gives
\[\bar{\eta}(t,\th)\leq h(t)\]
for $(t,\th)\in [3t_0,T]\times\mathbb{S}^{n-1}$. Letting $T\to +\infty$ yields the desired estimate for $\n_Au$.

This completes the proof.
\end{proof}

Using the almost $L^\infty$ estimates established in Theorem \ref{main-thm-represent}, we now prove Theorem \ref{main-thm}, which we restate in the following form.

\begin{thm}\label{sharp-es}
There exists $\ep_0>0$ such that if $(A,u)$ is a smooth YMH field defined on $B^*_{R_0}$, where $R_0\leq 1$, satisfying the energy bound \eqref{inq3}, then there exists $r_0\leq R_0/2$ such that $(A,u)$ satisfies the following $L^\infty$ estimates:
\begin{align}
	(1+|F_A|^2)^{\frac{1}{2}}(r,\th)\leq& C(1+\ep_0^{2}r_0^{-2})\label{Decay-es-ymh-1'}\\
	\(1+|\n_A u|^2\)^{\frac{1}{2}}(r,\th)\leq &C(1+\ep_0r^{-1}_0)\label{Decay-es-ymh-2'}
\end{align}
for all $0<r<r_0$.
\end{thm}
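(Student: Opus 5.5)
Starting from the almost $L^\infty$ estimates of Theorem \ref{main-thm-represent}, I would run the same comparison argument a second time. On the first pass the coefficients in \eqref{cylinder-eq-1}--\eqref{cylinder-eq-2} were controlled only through the $\ep$-regularity bound \eqref{thm-es-ymh-1}. Now they can be controlled by bounds that decay exponentially in $t$. Set $\ga:=\al-\frac{n-2}{2}$, which is small and negative ($|\ga|\lesssim \ep_0^2+r_0^{2/3}$). Estimates \eqref{Decay-es-ymh-1}--\eqref{Decay-es-ymh-2} give, in cylindrical coordinates and for $t\ge 3t_0$,
\[
|F_A|\le C(1+\ep_0^2 r_0^{-2})e^{-\frac{\ga}{\beta}(t-3t_0)},\qquad |\n_Au|^2\le C(1+\ep_0^2r_0^{-2})e^{-\frac{2\ga}{\ka}(t-3t_0)}.
\]
Because $|\ga|$ is small, the growth exponents $|\ga|/\beta$ and $2|\ga|/\ka$ are far below $2$. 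Consequently the coefficient $e^{-2t}(|\n_Au|^2+|F_A|+1)$ in \eqref{cylinder-eq-1}, and likewise $e^{-2t}(|\n_Au|^2+|F_A|^2+1)$ in \eqref{cylinder-eq-2}, is bounded by $C\Lambda e^{-\varrho'(t-3t_0)}$ with $\varrho'$ close to $2$ (at least $1$). This is the same structure that was used in Step 2 for $\eta$.

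Next I would use $p=1$ in Proposition \ref{eq-YMH}, which is allowed since $1\ge\frac{n-3}{n-2}+\de$ and $1\geq \frac{n-2}{n-1}+\de$. The functions $f_1=(1+|F_A|^2)^{1/2}$ and $f_2=(1+|\n_Au|^2)^{1/2}$ then satisfy the cylindrical inequalities with the exponentially decaying coefficient above. For the weighted functions $\bar f_i=e^{-\frac{n-2}{2}t}f_i$ this gives
\[
\p_t^2\bar f_i+\De_{\mathbb{S}^{n-1}}\bar f_i-\tfrac{(n-2)^2}{4}\bar f_i\ge -C\Lambda e^{-\varrho'(t-3t_0)}\bar f_i .
\]
The damping constant is now exactly $\al_0=\frac{n-2}{2}$; we no longer lose $C\ep_0^2$ as before, because the $\ep_0^2$ term has itself become decaying. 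The a priori upper bound is $\bar f_i\le C(1+\ep_0^{2/i}r_0^{-2/i})e^{-\frac{n-2}{2}t}e^{-\frac{\ga}{\beta}(t-3t_0)}$, i.e. of the form $c_2e^{\ld_2 t}$ with $\ld_2<\al_0$, and the right-hand side is bounded in the same way. I would then copy the barrier construction of Step 2: take $h=2K e^{-\al_0(t-T_0)}+(\text{term at }T)-\tilde f$, where $\tilde f=C\Lambda e^{-\varrho'(t-T_0)}\cdot(\text{a priori bound})$. The operator inequality for $h$ holds because $(\varrho'+\frac{n-2}{2}+\frac{\ga}{\beta})^2-\al_0^2\ge1$. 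Applying the comparison principle on $[T_0,T]$ and letting $T\to\infty$ yields $\bar f_i\le CK e^{-\al_0(t-T_0)}$, that is, $f_i\le CK$ with $K\sim 1+\ep_0^{2/i}r_0^{-2/i}$. Undoing the weight and translating back to $r$ gives \eqref{Decay-es-ymh-1'}--\eqref{Decay-es-ymh-2'} for $r<r_0$, after possibly renaming $r_0$ as a fixed power of the old one.

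The main obstacle is the bookkeeping of constants. The barrier argument has to give a bound that does not deteriorate with the starting time $T_0$. The comparison must therefore start at a fixed $T_0$ (say $3t_0$), where the value $K e^{-\al_0 T_0}$ is read off from the first-pass bounds. The slight growth factor $e^{-\frac{\ga}{\beta}(t-T_0)}$ must then be absorbed by the exact decay $e^{-\al_0(t-T_0)}$, and the $T$-boundary term must vanish. That term does vanish, since the a priori bound grows at rate $\ld_2=-\frac{n-2}{2}+|\ga|/\beta<\al_0$. I would also check that $\Lambda$ stays bounded by a multiple of $1+\ep_0^2r_0^{-2}$, so that the inhomogeneous correction $\tilde f$ contributes only at the same order.
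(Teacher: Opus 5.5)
Your proposal is correct and follows essentially the paper's argument: the almost-$L^\infty$ bounds of Theorem~\ref{main-thm-represent} turn the coefficient in \eqref{cylinder-eq-1}--\eqref{cylinder-eq-2} into an exponentially decaying perturbation, so the weighted function satisfies the comparison inequality with the exact rate $\frac{n-2}{2}$. The Step~2 barrier (an exact-rate exponential, plus a $T$-boundary term that vanishes as $T\to\infty$, minus the inhomogeneous correction) then gives boundedness. The differences are cosmetic: you take $p=1$ instead of reusing $p=\beta,\ka$ and raising to the power $1/\beta$, $1/\ka$ at the end, and you keep the term multiplicative and then bound it by the a priori estimate, where the paper writes it as an inhomogeneous term from the start. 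One correction to your bookkeeping: for the gradient inequality $\Lambda\sim r_0^2+\ep_0^2+\ep_0^4r_0^{-2}$ need not be small, so $\tilde f$ is not automatically of the same order as the leading term. You should therefore enlarge the factor $2$ to $1+C\Lambda$ (or start the comparison later) and then shrink $r_0$, which your renaming step already permits.
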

\begin{proof}
By estimates \eqref{Decay-es-ymh-1} and \eqref{Decay-es-ymh-2}, together with inequality \eqref{cylinder-eq-1}, we infer that the function $g=(1+|F_A|^2)^{\frac{\beta}{2}}$, where $\beta=\frac{n-3}{n-2}+\frac{1}{2(n-2)}$, satisfies
\begin{align*}
\p^2_t g-(n-2)\p_t g+\De_{\mathbb{S}^{n-1}}g\geq &-CL(\ep_0, t_0)e^{-(2+3(\al-\frac{n-2}{2}))t},
\end{align*}
on $\mathbb{S}^{n-1}\times [3t_0,\infty)$, where $L(\ep_0,t_0)=1+\ep^2_0e^{-3t_0}$.

Consequently, the function $\bar{g}(t,\th)=e^{-\frac{n-2}{2}t}g$ satisfies
\begin{equation*}
	\p^2_t \bar{g}+\De_{\mathbb{S}^{n-1}}\bar{g}-\frac{(n-2)^2}{4}\bar{g}\geq -CL(\ep_0, t_0)e^{-(\frac{(n-2)}{2}+2+3(\al-\frac{n-2}{2}))t}.   
\end{equation*}

Next, we consider the comparison function
\begin{align*}
	g_1(t)=&2C(e^{-\frac{n-2}{2}3t_0}+\ep_0^{2\beta}e^{(2\beta-\frac{n-2}{2})3t_0})e^{-\frac{n-2}{2}(t-3t_0)}\\
	&+2C(e^{-\frac{n-2}{2}3t_0}+\ep_0^{2\beta}e^{(2\beta-\frac{n-2}{2})3t_0})e^{-\al(T-3t_0)}e^{-\frac{n-2}{2}(T-t)}\\
	&-CL(\ep_0, t_0)e^{-(\frac{(n-2)}{2}+2+3(\al-\frac{n-2}{2}))t}.  
\end{align*}
For a suitable choice of $t_0$, a direct computation yields
\begin{align*}
\(\p^2_t+\De_{\mathbb{S}^{n-1}}-\frac{(n-2)^2}{4}\)(\bar{g}-g_1)\geq 0
\end{align*}
on $\mathbb{S}^{n-1}\times [3t_0,T]$, together with the boundary conditions
\[\bar{g}(3t_0)\leq g_1,\quad \bar{g}(T)\leq g_1(T).\]
Therefore, by the comparison principle, we obtain
\[\bar{g}(t,\th)\leq g_1(t)\]
for $3t_0\leq t\leq T$.

Since $\al<\frac{n-2}{2}$, letting $T\to +\infty$ yields
\[\bar{g}(t,\th)\leq 2C(e^{-\frac{n-2}{2}3t_0}+\ep_0^{2\beta}e^{(2\beta-\frac{n-2}{2})3t_0})e^{-\frac{n-2}{2}(t-3t_0)}\]
for every $t\geq 3t_0$. Transforming back to polar coordinates gives the desired estimate \eqref{Decay-es-ymh-1'}.

Finally, applying the same argument to the function $\eta=(1+|\n_A u|^2)^{\frac{\ka}{2}}$, with $\ka=1-\frac{1}{2(n-1)}$, establishes the $L^\infty$ estimate \eqref{Decay-es-ymh-2'} for $|\n_A u|$.
\end{proof}

\medskip
\section{Removable singularities for YMH fields in higher dimensions}\label{s: remov-singu}
In this section, we use the decay estimates \eqref{Decay-es-ymh-1'} and \eqref{Decay-es-ymh-2'} from Section \ref{s: decay-es} to prove the removable singularity theorem, Theorem \ref{remov-singu}. We first recall the local existence of a Coulomb gauge established by Smith and Uhlenbeck \cite{SU19}.
\begin{prop}\label{coulomb-gauge}
Let $B_1$ be the unit ball in $\Real^n$, where $n\geq 3$, and let $\mathcal{S}\subset B_1$ be a closed set of finite $(n-3)$-dimensional Hausdorff measure. There exists $\ep>0$ such that, for any connection $A$ smooth on $B_1\setminus\mathcal{S}$ whose curvature $F_A$ satisfies
\[\int_{B_1}|F_A|^pd\mu\leq \ep^p,\]
for some $p>n$, there exists a continuous gauge transformation $s$ such that $\bar{A}=s^*A\in W^{1,p}_{\mathrm{loc}}(B_1)$ and $\bar{A}$ has the following properties:
\begin{itemize}
\item[$(1)$] In $B_1$, $\bar{A}$ solves 
\begin{align}
 d^*\bar{A}=0, \label{eq-A} 
\end{align}

\item[$(2)$] There exists a constant $C$, depending only on $n$ and $p$, such that
\begin{align}
\norm{\bar{A}}_{W^{1,p}(B_{1/2})}\leq C\norm{F_{\bar{A}}}_{L^p}\leq C\ep.   \label{int-es-A} 
\end{align}
\end{itemize} 
\end{prop}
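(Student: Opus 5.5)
The statement is the Coulomb gauge result of Smith and Uhlenbeck \cite{SU19}, and the paper only recalls it. If I had to reprove it, I would follow Uhlenbeck's original strategy: first produce \emph{some} gauge in which $A$ is small in $L^p$ and whose distributional equations hold across $\mathcal{S}$. Then I would run Uhlenbeck's continuity argument to pass to the Coulomb gauge $d^*\bar A=0$. The codimension-three hypothesis on $\mathcal{S}$ enters only in the first part.

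\emph{Step 1: a radial gauge.} Choose a center $x_0$ near the origin such that almost every ray from $x_0$ misses $\mathcal{S}$. This is possible because the cone over $\mathcal{S}$ from $x_0$ has dimension at most $n-2$. Parallel transport along these rays defines a gauge $s_0$ with $A_0=s_0^*A$ satisfying $A_0(x)\cdot(x-x_0)=0$, and the standard formula
\[
|A_0(x)|\leq \int_0^1 t\,|F_A(x_0+t(x-x_0))|\,dt .
\]
Averaging over admissible centers $x_0$ and applying Hardy's inequality gives $\norm{A_0}_{L^p(B_{3/4})}\leq C\norm{F_A}_{L^p}\leq C\ep$. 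Since $p>n$, the $L^p$ smallness of $F_A$ also gives pointwise control of holonomy around small generic $2$-discs, which miss $\mathcal{S}$ because $\dim\mathcal{S}\leq n-3<n-2$. From this I would deduce that $s_0$ has a continuous extension across $\mathcal{S}$, equivalently that the transition functions have no monodromy around $\mathcal{S}$.

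\emph{Step 2: the equations hold across $\mathcal{S}$.} On $B_1\setminus\mathcal{S}$ we have $dA_0=F_{A_0}-\tfrac12[A_0,A_0]$, and the right-hand side lies in $L^{p/2}\cap L^1$. Since $\mathcal{H}^{n-3}(\mathcal{S})<\infty$, the set $\mathcal{S}$ has zero $W^{1,q}$-capacity for every $q<3$, and in particular for $q=p'<n/(n-1)$. Testing against cutoffs that vanish near $\mathcal{S}$ and passing to the limit, I would conclude that the identity holds distributionally on all of $B_1$. Hence $A_0\in L^p$ with $dA_0\in L^{p/2}$ globally.

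\emph{Step 3: Uhlenbeck's continuity method.} Consider the family $A_\tau=\tau A_0$ for $\tau\in[0,1]$, and let $\mathcal{T}$ be the set of $\tau$ for which there is a gauge $s_\tau$ with $d^*(s_\tau^*A_\tau)=0$ and $\norm{s_\tau^*A_\tau}_{W^{1,p}}\leq 2C\norm{F_{A_\tau}}_{L^p}$. Openness of $\mathcal{T}$ follows from the implicit function theorem applied to $s\mapsto d^*(s^*A_\tau)$ near a Coulomb gauge; the linearization is $d^*d$ with Neumann-type boundary conditions, and $W^{1,p}$ gauges are continuous since $p>n$. Closedness comes from the a priori estimate. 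When $d^*\bar A=0$ one has
\[
\norm{\bar A}_{W^{1,p}}\leq C\norm{d\bar A}_{L^p}\leq C\norm{F}_{L^p}+C\norm{\bar A}_{L^{2p}}^2\leq C\norm{F}_{L^p}+C\norm{\bar A}_{W^{1,p}}^2,
\]
and the quadratic term can be absorbed by smallness. Taking $\tau=1$ and setting $s=s_0s_1$ gives \eqref{eq-A} and \eqref{int-es-A}; Morrey's embedding then yields continuity of $s$.

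I expect Step~1 with Step~2 to be the main obstacle, namely showing that the a priori singular gauge extends continuously and that no topological obstruction is concentrated on $\mathcal{S}$. The point of the codimension-three hypothesis is that loops, and discs spanning them, generically avoid $\mathcal{S}$. I would make the holonomy estimate quantitative using the $L^p$ bound with $p>n$ on generic discs.
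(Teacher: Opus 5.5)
The paper does not prove this proposition; it quotes it from \cite{SU19}. Your outline (radial gauge, removal of the singular set, Coulomb gauge) is a reasonable way to reprove it, but Steps 1 and 3 fail as written. Both can be repaired.

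\textbf{Step 1.} The claim that $s_0$ extends continuously across $\mathcal{S}$ is false, and it is not what you need. The trivialization in which $A$ is given can be arbitrarily bad near $\mathcal{S}$. For example, if $A=h^{-1}dh$ with $h$ smooth on $B_1\setminus\mathcal{S}$ but discontinuous at $\mathcal{S}$, then $F_A=0$, $A_0=0$ and $s_0=h^{-1}h(x_0)$, which does not extend. The statement only asks for $s$ continuous on $B_1\setminus\mathcal{S}$.

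The real defect of the radial gauge lies on the shadow cone $\Sigma=\{x_0+t(y-x_0):y\in\mathcal{S},\,t\geq 1\}$, which has locally finite $\mathcal{H}^{n-2}$-measure. Your holonomy-on-generic-discs mechanism is left unquantified: an $L^p$ bound controls $\int_D|F_A|$ only on average over families of discs. It is also unnecessary, as follows.
\begin{itemize}
\item Minkowski's inequality gives $\norm{A_0}_{L^p(B_1)}\leq C\norm{F_A}_{L^p}$ for every center $x_0\notin\mathcal{S}$ once $p>n/2$. No averaging over centers is needed.
\item Since $p'<2$, the set $\Sigma$ has zero $p'$-capacity. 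Your Step 2 cutoff argument therefore removes all of $\Sigma$, not just $\mathcal{S}$.
\item Off $\Sigma$ we have $ds_0=s_0A_0-As_0\in L^p_{\mathrm{loc}}(B_1\setminus\mathcal{S})$. Since $\mathcal{H}^{n-1}(\Sigma)=0$, almost every coordinate line misses $\Sigma$. The ACL characterization then gives $s_0\in W^{1,p}_{\mathrm{loc}}(B_1\setminus\mathcal{S})$, and Morrey's embedding ($p>n$) makes $s_0$, hence $s$, continuous there.
\end{itemize}

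\textbf{Step 3.} The continuity method along $A_\tau=\tau A_0$ breaks down in the $W^{1,p}$/$L^p$ setting. We have $F_{A_\tau}=\tau F_{A_0}+\tfrac{\tau^2-\tau}{2}[A_0,A_0]$. The radial gauge only gives $A_0\in L^p$, so $[A_0,A_0]$ lies in $L^{p/2}$ and need not lie in $L^p$. For $0<\tau<1$ the quantity $\norm{F_{A_\tau}}_{L^p}$ in your definition of $\mathcal{T}$ may then be infinite. In that case no $W^{1,p}$ Coulomb gauge exists at all, because a $W^{1,p}$ connection with $p>n$ has $L^p$ curvature and $|F|$ is gauge invariant. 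The open--closed argument then has nothing to work with.

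No continuity method is needed here, because $A_0$ is already small in $L^p$ with $p>n$.
\begin{itemize}
\item Since $W^{1,p}$ is an algebra, the map $(\xi,A_0)\mapsto d^*\big((e^{\xi})^*A_0\big)$ from $W^{1,p}_0(B_1,\g)\times L^p$ to $W^{-1,p}$ is smooth. Its $\xi$-derivative at $(0,0)$ is the Dirichlet Laplacian, an isomorphism.
\item The implicit function theorem gives $\xi$ with $\norm{\xi}_{W^{1,p}}\leq C\norm{A_0}_{L^p}$ and $d^*\bar A=0$, where $\bar A=(e^{\xi})^*A_0$.
\item Then $d\bar A=F_{\bar A}-\tfrac12[\bar A,\bar A]\in L^{p/2}$ with $p/2>n/2$. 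Finitely many interior bootstrap steps, using smallness to absorb the quadratic term, give $\norm{\bar A}_{W^{1,p}(B_{1/2})}\leq C\norm{F_A}_{L^p}$.
\end{itemize}
The required gauge is then $s=s_0e^{\xi}$.
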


We now use Proposition \ref{coulomb-gauge} to prove Theorem \ref{remov-singu}.
\begin{proof}[Proof of Theorem \ref{remov-singu}]

The proof is divided into two steps.

\medskip
\noindent\emph{Step 1: In a local Coulomb gauge, the YMH field $(A,u)$ is smooth near the origin.}

\medskip
Applying decay estimate \eqref{key-es-1} from Theorem \ref{main-thm}, with both $\ep_0$ and $r_0$ sufficiently small, gives
\begin{align}
\int_{B_{r_0}}|F_A|^pdx\leq \ep^pr_0^{-2p+n}\label{int-es-F}
\end{align}
for $p>2n$, where $\ep$ is the constant in Proposition \ref{coulomb-gauge}. Setting $\tilde{A}(x)=r_0A(r_0x)$, we obtain
\[\int_{B_1}|F_{\tilde{A}}|^pdx\leq \ep^p.\]
Proposition \ref{coulomb-gauge} then provides a gauge $s$ on $B_1$ in which \eqref{eq-A} and \eqref{int-es-A} hold. After rescaling, these estimates become
\[d^*A_1=0 \quad\text{in}\,\, B_{r_0}\]
and
\begin{equation}\label{es-A}
\norm{A_1}_{W^{1,p}(B_{r_0/2})}\leq C(r_0,p)\ep.
\end{equation}
Here $A_1=s^*A=s^{-1}ds+s^{-1}As$.
	
Moreover, decay estimate \eqref{key-es-2} for $\n_Au$ from Theorem \ref{main-thm} implies that
\begin{equation}\label{es-u}
\norm{u_1}_{W^{1,\infty}(B_{r_0/2})}\leq C(r_0),
\end{equation}
where we set $u_1=s^*u$.
	
Under the gauge transformation $s$, the YMH field $(A_1,u_1)$ satisfies the extrinsic equation \eqref{eq-ymh2} on $B_{r_0}$:
\begin{equation*}
\begin{cases}
\De A_1+A_1\#dA_1+A_1\#A_1\#A_1=-u_1^{*}(\n_{A_1}u_1),\\
\De u_1+2A_1\#du_1+A_1(A_1u_1)=\Ga(u_1)(\n_{A_1} u_1,\n_{A_1}u_1)+2\n V(u_1).
\end{cases}
\end{equation*}
where $\Ga(u_1)$ is the second fundamental form of $N$ in $\Real^K$. Estimates \eqref{es-A}-\eqref{es-u} imply that
\[(\De A_1, \De u_1)\in L^\frac{p}{2}_{loc}(B_{r_0/2}),\]
and the $L^p$ estimates for elliptic equations imply that
\[(A_1,u_1)\in W^{2,\frac{p}{2}}_{loc}(B_{\rho_0})\hookrightarrow C^{1,1-\frac{2n}{p}}_{loc}(B_{r_0/2}).\]
	
A standard bootstrap argument applied to this $C^{1,1-\frac{2n}{p}}$ estimate yields $(A_1,u_1)\in C^\infty(B_{r_0/2})$.

\medskip
\noindent\emph{Step 2: A gluing argument produces a global continuous gauge on $B_1$ in which the YMH field $(A,u)$ is smooth.}

\medskip
Without loss of generality, assume that the connection $A$ has bounded $L^p$ norm on $B_{r_0/2}$, where $p>n$. Indeed, a radial gauge over $B^*_1$ is obtained by parallel transport along each ray $\ga_\th:=\{(r,\th):\ 0<r<1\}$ in $B_1^*$. In this gauge, $A=A_\th d\th$, and hence $r|A|=|A_\th|$, which implies
\[r|A|(r,\th)\leq \int_{0}^rt|F_A|(t,\th)dt.\]
Applying H\"older's inequality and estimate \eqref{int-es-F}, with $p>n$, gives (see also \cite{SU19})
\begin{align}
\norm{A}_{L^p(B_{r_0/2})}\leq Cr_0\norm{F_A}_{L^p(B_{r_0})}\leq C(r_0)\ep.\label{es-A-1}
\end{align}

Since $A_1$ and $A$ are smooth on $B^*_{r_0/2}$, it follows from
\begin{equation}\label{eq-s}
ds=sA_1-As
\end{equation}
that $s$ is smooth on $B^*_{r_0/2}$.

Let $\rho\leq r_0/4$ be a positive constant to be determined. For $q\in B^*_{\rho}$, replacing $s$ with $s^{-1}(q)s$ allows us to assume that $s(q)=\mathrm{id}$. Using estimates \eqref{es-A} and \eqref{es-A-1}, with $p>n$, in equation \eqref{eq-s}, we obtain
\begin{align*}
\sup_{y\in B^*_{\rho}}|s(y)-\mathrm{id}|\leq& C(r_0)\rho^{1-\frac{n}{p}}\norm{ds}_{L^p(B^*_{r_0/2})}\leq C(r_0)\ep\rho^{1-\frac{n}{p}}.
\end{align*}
Thus, by choosing $\rho$ small enough, we can rewrite $s=\exp\xi(x)$ on $B^*_\rho$, where $\xi: B^*_{\rho}\to \g$ is a smooth function such that $\xi(q)=0$.

Let $\eta\in C^\infty_0(B_1)$ be a smooth cutoff function such that $\eta(x)=1$ for $r\leq \frac{1}{4}\rho$ and $\eta(x)=0$ for $r\geq \frac{1}{2}\rho$. Define a global gauge on $B^*_1$ by
\begin{equation*}
\tilde{s}(x)=
\left\{
\begin{aligned}
& s\,\, &0<r\leq \frac{\rho}{4},\\
& \exp(\eta(x)\xi(x))\,\,&\frac{\rho}{4}\leq r\leq \frac{\rho}{2},\\
& \mathrm{id} \,\, &\frac{\rho}{2}\leq r<1.
\end{aligned}\right.
\end{equation*}
It follows that the YMH field $(\tilde{s}^*A,\tilde{s}^*u)$ is smooth on $B_1$.

Thus, the singularity of the YMH field $(A,u)$ is removable, and the bundle extends over the ball $B_1$.
\end{proof}

Finally, we prove Corollary \ref{remov-singu1}.

\begin{proof}
By Theorem \ref{remov-singu}, it suffices to show that for every $\ep_0>0$, there exists $R_0\leq 1$ such that
\[\sup_{B_{\rho(y)}\subset B_{R_0}}\(\frac{1}{\rho^{n-2}}\int_{B_{\rho}(y)}|\n_A u|^2+|F_A|dx\)\leq \ep^2_0,\]
which is precisely the energy bound \eqref{inq3'}.
	
For any $B_\rho(x)\subset B_1$, we have
\begin{align}
\frac{1}{\rho^{n-2}}\int_{B_\rho(x)}|\n_A u|^2+|F_A|dx\leq C(\int_{B_{\rho}}|F_A|^{\frac{n}{2}}dx+\int_{B_{\rho}}|\n_A u|^ndx)^{\frac{2}{n}},\label{es-1}
\end{align}
where $C>0$ depends only on $n$.

By \eqref{cor-con-enery}, there exists $0<R_0\leq 1$ such that
\begin{align}
\int_{B_{R_0}}|F_A|^{\frac{n}{2}}dx+\int_{B_{R_0}}|\n_A u|^ndx\leq \frac{1}{C^{\frac{n}{2}}}\ep^n_0.  \label{es-2}  
\end{align}
Combining \eqref{es-1} and \eqref{es-2} yields the desired energy bound \eqref{inq3'}.

The conclusion now follows from Theorem \ref{remov-singu}.
\end{proof}

\medskip
\noindent {\it\bf{Acknowledgments}}: B. Chen is partially supported by the National Natural Science Foundation of China (Grant No. 12301074) and the Guangzhou Basic and Applied Basic Research Foundation (Grant No. 2024A04J3637).

\medskip
\section*{Statements and declarations}
\noindent {\it\bf{Conflict of interest}}: The author declares no conflict of interest.

\medskip
\noindent {\it\bf{Data availability}}: Data sharing is not applicable to this article because no datasets were generated or analyzed during the study.

\end{document}